\newtheorem{theorem}{Theorem}
\newtheorem{lemma}{Lemma}
\newtheorem{definition}{Definition}[section]
\theoremstyle{definition}
\newtheorem{example}{Example}[section]
\theoremstyle{remark}
\newtheorem{remark}{Remark}[section]
\newtheorem*{acknowledgement}{Acknowledgement}
\renewcommand{\MR}[1]{}
\newcommand{\N}{\mathbb{N}}
\newcommand{\Z}{\mathbb{Z}}
\newcommand{\wNAF}{{w\text{-NAF}}}
\newcommand{\h}{h}
\newcommand{\assign}{:=}
\newcommand{\bigOh}{\mathcal{O}}
\newcommand{\eps}{\varepsilon}
\newcommand{\Expect}{\mathbb{E}}
\newcommand{\Var}{\mathbb{V}}
\DeclareMathOperator{\Cov}{Cov}
\newcommand{\diag}{\text{diag}}
\newcommand{\val}{\mathsf{value}}
\newcommand{\wtW}{\widetilde{W}}
\newcommand{\wtX}{\widetilde{X}}
\newcommand{\wtT}{\widetilde{T}}
\newcommand{\unique}{\mathsf{unique}}
\newcommand{\nonunique}{\mathsf{nonunique}}
\newcommand{\upper}{\mathsf{upper}}
\renewcommand{\ell}{l}
\title{Analysis of the binary asymmetric joint sparse form}
\author{Clemens Heuberger}
\address{Institut f\"ur Mathematik, Alpen-Adria-Universit\"at Klagenfurt,
  Universit\"atsstra\ss e 65--67, 9020 Klagenfurt am W\"orthersee, Austria and
  Institut f\"ur Optimierung und Diskrete Mathematik (Math B), TU Graz,
  Steyrergasse 30, 8010 Graz, Austria}
\email{clemens.heuberger@aau.at}
\thanks{The authors are supported by the Austrian Science Fund (FWF):
  P 24644-N26.}
\author{Sara Kropf}
\address{Institut f\"ur Mathematik, Alpen-Adria-Universit\"at Klagenfurt,
  Universit\"atsstra\ss e 65--67, 9020 Klagenfurt am W\"orthersee, Austria}
\email{sara.kropf@aau.at}
\keywords{Redundant number systems, 
signed digit expansions,
binary representations, 
joint representations,
nonadjacent forms, 
joint sparse form,
minimal weight,  
Hamming weight,
elliptic curve cryptography, 
scalar multiplication, 
periodic fluctuation,
central limit theorem.}
\subjclass[2010]{11A63; 94A60 68W40 60F05}
\begin{document}

\begin{abstract}
  We consider redundant binary joint digital expansions of integer vectors. The
  redundancy is used to minimize the Hamming weight, i.e., the number of
  nonzero digit vectors. This leads to efficient linear combination algorithms
  in abelian groups, which are for instance used in elliptic curve
  cryptography.
  
  If the digit set is a set of contiguous integers containing zero, a
  special syntactical condition is known to minimize the weight. We analyze the
  optimal weight of all non-negative integer vectors with maximum entry less
  than $N$. The expectation and the variance are given with a main term and a
  periodic fluctuation in the second order term. Finally, we prove asymptotic
  normality.
\end{abstract}  
\maketitle

  \section{Introduction}

We deal with integer representations of vectors of integers called joint
representations.
\begin{definition}
  For base $2$, dimension $d$ and a digit set $D\subseteq \Z$, the
  dimension-$d$ \emph{joint representation} of a vector $n\in\Z^{d}$ is a word
  $(\eps_L\ldots \eps_0)$ with $\eps_i\in D^d$ and
  $n=\val(\eps_{L}\ldots\eps_{0})$ with $\val(\eps_{L}\ldots\eps_{0})=\sum_{i=0}^L\eps_i2^i$.
\end{definition}

Such representations can be used for computing a linear combination
$m_{1}P_{1}+\cdots+m_{d}P_{d}$ of points $P_{i}$ of an elliptic curve, or more
generally an abelian group
(cf. \cite{straus}). For every nonzero digit $\eps_{i}$, an elliptic curve
addition is performed. Since these are expensive, we want to minimize the
number of nonzero digits. On the other side, every nonzero column vector $\eps\in D^{d}$
corresponds to a precomputed point. The number of doublings corresponds to the
length of the expansion. Each $\eps_{i}$ in the expansion
$(\eps_{L}\ldots\eps_{0})$ is called a column vector of the expansion.

\begin{example}\label{ex:jointexp}
  A dimension-$3$ digit expansion with digit set
  $\{0,1,2\}$ is
$$\begin{pmatrix}1011\\0020\\2001\end{pmatrix}.$$
It is a representation  of $(11,4,17)^{T}$, because
$$\begin{pmatrix}11\\4\\17\end{pmatrix}=\val\begin{pmatrix}1011\\0020\\2001\end{pmatrix}=\begin{pmatrix}1\\0\\2\end{pmatrix}2^{3}+\begin{pmatrix}0\\0\\0\end{pmatrix}2^{2}+\begin{pmatrix}1\\2\\0\end{pmatrix}2^{1}+\begin{pmatrix}1\\0\\1\end{pmatrix}2^{0}.$$
\end{example}

\begin{definition}
  The \emph{Hamming weight} $\h(\eps_{L}\ldots\eps_{0})$ of a digit expansion
  $(\eps_{L}\ldots\eps_{0})$ is the number of nonzero columns $\eps_{i}\neq0$.
\end{definition}

\begin{example}
Continuing with Example \ref{ex:jointexp}, we have the Hamming weight $$\h\begin{pmatrix}1011\\0020\\2001\end{pmatrix}=3.$$
\end{example}

The Hamming weight of an integer depends on the representation we use. For
example, we have two representations of $4=\val(12)=\val(100)$ with Hamming weight
$\h(12)=2$ and $\h(100)=1$. But since we always use a specific digit
expansion in this paper, we just write $\h(n)$ for the Hamming weight of
this digit expansion.

This specific digit expansion is the asymmetric joint sparse form (short AJSF) as presented
by Heuberger and Muir in \cite{asymmdigits}. The AJSF is the unique
dimension-$d$ joint integer representation in base $2$ with digit set
$D_{l,u}=\{a\in\Z\mid l\leq a\leq u\}$ described in Theorem~\ref{th:AJSF} (see
\cite[Theorem 6.1]{asymmdigits}). There, Heuberger and Muir proved that the
AJSF is colexicographically minimal and has minimal Hamming weight among all
representations with this digit set $D_{l,u}$.

The width-$w$ nonadjacent form \cite{muirwNAF, avanzi} and the simple joint sparse form \cite{GHP} are special cases of the 
asymmetric joint sparse form. For the width-$w$ nonadjacent form, we use
$l=-2^{w-1}+1$, $u=2^{w-1}-1$ and dimension $1$. The simple joint sparse
form has digit set $D_{-1,1}$ and dimension $2$. The special case of Theorem~\ref{thm:asywgen}
for the simple joint sparse form has been proved in \cite{GHP}. For further results on syntactically defined optimal digit expansions, we refer to
\cite{asymmdigits} and the references therein.

We compute the expected value, the variance and the asymptotic
distribution of the Hamming weight of the AJSF. We obtain a main term plus a
periodic fluctuation and an error term, similar to the asymptotic estimates of digital sums in \cite{Flajolet-Grabner-Kirschenhofer-Prodinger:1994:mellin}. The definitions and algorithms
of the AJSF are recalled in Section~\ref{sec:pre}. In Section~\ref{sec:constr-trans}, we
construct a transducer from this algorithm. In Theorem~\ref{thm:transdimd}, we
explicitly describe this transducer to compute the Hamming weight. In
Section~\ref{sec:asymp-distr-hamm}, we prove the following Theorem~\ref{thm:asywgen} about the asymptotic normal distribution
of the Hamming weight. We use the discrete probability space $\{n\in\Z\mid 0\leq n<N\}^{d}$
with uniform distribution as a probabilistic model, in contrast to
\cite{asymmdigits}. There, only residue classes modulo powers of $2$ have been
considered in the ``full-block-length'' analysis. 

\begin{theorem}\label{thm:asywgen}
    	The Hamming weight $\h(m_1,\ldots,m_d)$ of the AJSF of an integer
        vector $(m_1,\ldots,m_d)^T$ over the digit set $D_{l,u}$ in dimension
        $d$ with equidistribution of all vectors $(m_{1},\ldots,m_{d})^{T}$
        with $0\leq m_{i}<N$ for an integer $N$ is asymptotically normally
        distributed. There exist constants $e_{\ell,u,d}$,
        $v_{\ell,u,d}\in\mathbb R$ and $\delta>0$, depending
        on $u$, $l$ and $d$, such that the expected value is
        $$e_{\ell,u,d}\log_2N+\Psi_1(\log_2N)+\mathcal O(N^{-\delta}\log N)$$
         and the variance is 
        $$v_{\ell,u,d}\log_2 N-\Psi_1^2(\log_2N)+\Psi_2(\log_2N)
    +\mathcal O(N^{-\delta}\log^2N),$$
    where $\Psi_1$ and $\Psi_2$ are continuous, $1$-periodic functions on $\mathbb R$. In particular, we have
    	$$\mathbb P\left(\frac{\h(m_1,\ldots,m_d)-e_{\ell,u,d}\log_2N}{\sqrt{v_{\ell,u,d}\log_2N}}<x\right)=\int_{-\infty}^xe^{\frac{-y^2}{2}}dy+\mathcal O\left(\frac 1{\sqrt[4]{\log N}}\right)$$
    	for all $x\in\mathbb R$.
      For $d=1$, we have
  \begin{equation*}
    e_{\ell,u,1}=\frac{1}{w-1+\lambda} \text{~~~and~~~}
    v_{\ell,u,1}=\frac{(3-\lambda)\lambda}{(w-1+\lambda)^3},
  \end{equation*}
  where 
  \begin{align*}
    \lambda&=\frac{2
              (u-\ell+1)-(-1)^\ell-(-1)^u}{2^w}
  \end{align*}
  and $w$ is the unique integer such that $2^{w-1}\leq
  u-l+1<2^{w}$. Furthermore, for $d=1$, the function $\Psi_{1}(x)$ is nowhere differentiable.
  General formulas for $e_{\ell,u,d}$ for $d=2$ are given in
  \cite[Table~3]{asymmdigits}. For $d\in\{1,2,3,4\}$,
  general formulas for $e_{\ell,u,d}$ and $v_{\ell,u,d}$ are given in
  \cite{heuberger:colexi-webpage2006}.
    \end{theorem}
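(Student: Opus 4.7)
The plan is to exploit the transducer $\mathcal T$ of Theorem~\ref{thm:transdimd}, which reads the binary expansions of $m_{1},\ldots,m_{d}$ simultaneously from the least significant bit upward and emits at each step a $0/1$ label indicating whether the corresponding column of the AJSF is nonzero. The Hamming weight $\h(m_{1},\ldots,m_{d})$ is then the total output along the unique run of $\mathcal T$, an additive functional on a finite-state process driven by the input digits.

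I would begin with the ``clean'' case $N=2^{L}$, in which the input bits form an i.i.d.\ sequence of uniform vectors on $\{0,1\}^{d}$. Let $M(y)$ be the weight-enumeration matrix whose $(s,t)$-entry is $\sum y^{\text{output}}$, summed over those input vectors that take state $s$ to $t$; then the probability generating function of $\h$ for inputs below $2^{L}$ is, up to $\bigOh(1)$ boundary factors, a bilinear form in $M(y)^{L}/2^{dL}$. By Perron--Frobenius the unique dominant eigenvalue $\rho(y)$ is analytic near $y=1$ with $\rho(1)=2^{d}$, so the moment generating function takes a quasi-power shape. Hwang's quasi-power theorem then yields a central limit theorem with the constants $e_{\ell,u,d}$ and $v_{\ell,u,d}$ read off from $\log\rho$ at $y=1$. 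For $d=1$ the transducer has at most $2^{w-1}$ states, and a direct computation of the stationary distribution of the underlying Markov chain produces the closed forms for $e_{\ell,u,1}$, $v_{\ell,u,1}$ and $\lambda$.

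Passing from $N=2^{L}$ to arbitrary $N$ uses the standard decomposition of $\{0,\ldots,N-1\}^{d}$ along the binary expansion $N=\sum 2^{L_{i}}$ with $L_{1}>L_{2}>\cdots$, which writes the sum over inputs as a telescoping sum of contributions from ``full blocks'' of various lengths. Extracting asymptotics via the Dirichlet/Mellin technique of \cite{Flajolet-Grabner-Kirschenhofer-Prodinger:1994:mellin} produces the continuous $1$-periodic Fourier series $\Psi_{1}$ and $\Psi_{2}$ in the subleading term, and the $-\Psi_{1}^{2}$ in the variance is precisely the correction coming from $\Var\h=\Expect\h^{2}-(\Expect\h)^{2}$ when $\Expect\h$ itself contains a fluctuating term. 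For the CLT with rate $\bigOh(\log^{-1/4}N)$ I would condition on the top-level block of length $L_{1}$, apply the quasi-power CLT there, and control the contribution of the smaller blocks as an $\bigOh(\sqrt{L_{1}/\log N})$ perturbation, whose optimal balance yields the $1/\sqrt[4]{\log N}$ rate.

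The step I expect to be the main obstacle is the nowhere differentiability of $\Psi_{1}$ for $d=1$. One would identify $\Psi_{1}$ as a lacunary Fourier series whose coefficients are determined by subdominant spectral data of $M(y)$ near $y=1$ (evaluated along the imaginary line $y=2^{is}$), and then invoke a nowhere differentiability criterion of Takagi type, adapting the arguments used in the Mellin-transform setting of \cite{Flajolet-Grabner-Kirschenhofer-Prodinger:1994:mellin} to the transducer at hand. Verifying that the relevant Fourier coefficients decay at exactly the critical rate, and in particular that none of them accidentally vanishes, is the delicate technical point.
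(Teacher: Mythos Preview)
Your overall architecture---transfer matrix, dominant eigenvalue, quasi-power CLT---is the same as the paper's, but several implementation choices diverge. The paper does \emph{not} go through a Mellin/Dirichlet analysis to obtain $\Psi_1,\Psi_2$; instead it sets up a matrix-valued recursion $F(2N+\eps)=AF(N)+\eps H(N)$ for the summatory function directly, solves it by iterated substitution (Lemma~\ref{lem:rec}), and then splits $A^p$ via its Jordan form into a dominant part (yielding $\Psi$ as an explicit series in the binary digits of $2^{\{\log_2 N\}}$) and a remainder of size $\bigOh(N^{d-\delta})$. Continuity of $\Psi$ is proved by an elementary two-sided limit argument at dyadic points, not via Fourier convergence. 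For the CLT rate the paper applies Berry--Esseen directly to the characteristic function, which is cleaner than your block-conditioning sketch (note that your perturbation bound $\bigOh(\sqrt{L_1/\log N})$ is $\bigOh(1)$ since $L_1\sim\log_2 N$, so that outline needs repair). The explicit constants $e_{\ell,u,1}$, $v_{\ell,u,1}$ are not recomputed from the transducer---they are imported from the full-block-length analysis of \cite{asymmdigits} via Lemma~\ref{lm:dluvert2}. Incidentally, the $1$-dimensional transducer has fewer than $4w-2$ states (Theorem~\ref{th:existtrans1}), not $2^{w-1}$.

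The real gap is the nowhere-differentiability of $\Psi_1$. Your plan to view $\Psi_1$ as a lacunary series and check that its Fourier coefficients have exactly the critical decay and do not vanish is, as you suspect, extremely delicate; for a transducer with $\bigOh(w)$ states depending on $\ell,u$ this verification may simply be intractable. The paper avoids Fourier analysis entirely and uses Tenenbaum's method: assume $\Psi_1$ is differentiable at some $x$, choose integers $N_k\to\infty$ with $\{\log_2 N_k\}\to x$, and evaluate $\sum_{N_k\le n<N_k+g(N_k)}\h(n)$ in two ways---once via the asymptotic formula for the full sum, and once using the reset-sequence identity $\h(2^{p+4w}n+m)=\h(n)+\h(m)$ (Theorem~\ref{thm:transdimd}), which reduces the interval sum to $g(N_k)\h(N_k)$ plus a known sum over $\{0,\ldots,g(N_k)-1\}$. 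Comparing the two and differencing consecutive $k$ forces $\h(N_{k+1})-\h(N_k)=e_{\ell,u,1}+o(1)$, an integer equal to a non-integer, contradiction. This argument needs only $e_{\ell,u,1}\notin\Z$, which is immediate from $0<e_{\ell,u,1}<1$, and is far more robust than a coefficient-by-coefficient Fourier check.
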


    For higher dimension or the variance, 
the question of non-differentiability of the periodic fluctuations
remains open.
    
    In the last Section~\ref{sec:roots}, we further investigate the error term
    of the expected value and the variance of the Hamming weight in the case
    of the width-$w$ nonadjacent form. In this case, we have
    $\delta=\log_{2}\left(1+\frac{3\pi^{2}}{w^{3}}\right)$ for sufficiently large $w$, see Theorem~\ref{thm:wNAF}.

\section{Preliminaries}\label{sec:pre}

First, we define some properties of the digit set.

\begin{definition}
Let $D_{l,u}=\{a\in\Z\mid l\leq a\leq u\}$ for $l\leq0$ and
$u\geq1$ be the digit set. It  contains $u-l+1$ digits. We define $w$ to be the
unique integer s.t. $2^{w-1}\leq u-l+1<2^{w}$.\end{definition}

 Because
$0,1\in D_{l,u}$, we have $w\geq2$. The digit set contains
at least a complete set of residues modulo $2^{w-1}$. However, some residues
modulo $2^{w}$ are not contained. Thus we define the following sets:

\begin{definition}
Let 
\begin{align*}\unique(D_{l,u})&=\{a\in D_{l,u}\mid u-2^{w-1}<a<l+2^{w-1}\},\\
\nonunique(D_{l,u})&=D_{l,u}\setminus\unique(D_{l,u}),\\
\upper(D_{l,u})&=\{a\in D_{l,u}\mid u-2^{w-1}<a\leq u\}.\end{align*}
 The sets $\unique(D_{l,u})$ and $\nonunique(D_{l,u})$ contain
the unique respectively non-unique residues modulo $2^{w-1}$. The set
$\upper(D_{l,u})$ is a complete set of
representatives modulo $2^{w-1}$.
\end{definition}

Without loss of generality, we can restrict $l$ to be greater than
$-2^{w-1}$. Otherwise, we would take the digit set $D_{-u,-l}$ where we have
$-2^{w-1}<-u\leq-1$. Then every representation of a vector $n$ of integers
with digit set $D_{l,u}$ would correspond to a representation of $-n$ with
digit set $D_{-u,-l}$ by changing the sign of each digit. By this transformation, the weight
of the representation does not change.

\begin{theorem}[\cite{asymmdigits}]\label{th:AJSF}Let $D_{l,u}$ be a digit set and
  $n\in\Z^{d}$ (with $n\geq0$ if $l=0$). Then there exists exactly one
  representation  $(\eps_{L}\ldots\eps_{0})$ (up to leading $0$'s) of $n$,
  such that the following conditions are satisfied:
\begin{enumerate}
\item Each column $\eps_{j}$ is $0$ or contains an odd digit.
\item If $\eps_{j}\neq 0$ for some $j$, then $\eps_{j+w-2}=\cdots=\eps_{j+1}=0$.
\item If $\eps_{j}\neq 0$ and $\eps_{j+w-1}\neq0$ for some $j$, then
  \begin{enumerate}
    \item there is an $i\in\{1,\ldots,d\}$ such that $\eps_{j+w-1,i}$ is odd
      and $\eps_{j,i}\in\unique(D_{l,u})$,
    \item if $\eps_{j,i}\in\nonunique(D_{l,u})$, then
      $\eps_{j+w-1,i}\not\equiv u+1\mod2^{w-1}$,
    \item if $\eps_{j,i}\in\upper(D_{l,u})\cap\nonunique(D_{l,u})$, then
      $\eps_{j+w-1,i}\equiv u\mod 2^{w-1}$.
  \end{enumerate}
\end{enumerate}
\end{theorem}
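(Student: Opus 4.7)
The plan is to exploit the deterministic finite transducer $\mathcal{T}$ constructed in Theorem~\ref{thm:transdimd}. Reading the binary expansions of $(m_1,\ldots,m_d)$ simultaneously into $\mathcal{T}$ produces the AJSF column by column, with a unit of output emitted whenever a nonzero AJSF column is generated; thus $\h(m_1,\ldots,m_d)$ is a transducer-additive statistic. Under the uniform measure on $\{0,\ldots,N-1\}^d$ the input digits are, up to a boundary effect confined to the top $\bigOh(1)$ positions, independent uniform elements of $\{0,1\}^d$, so the Hamming weight reduces essentially to an additive functional of a random walk on the finite state set of $\mathcal{T}$.

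To obtain the mean and variance I would use the Mellin--Perron technique of \cite{Flajolet-Grabner-Kirschenhofer-Prodinger:1994:mellin}. Encode $\sum_{n<N}\h(n)$ as an integral of a Dirichlet series whose meromorphic continuation is controlled by a parameter-perturbed transition matrix of $\mathcal{T}$: the pole at $s=1$ contributes the main term $e_{\ell,u,d}\log_2 N$, where $e_{\ell,u,d}$ equals the stationary mean output per digit, while the poles along $\Re s=0$ at the points $s=2\pi ik/\log 2$ assemble into the Fourier series of the continuous $1$-periodic function $\Psi_1$. A spectral gap of the transition matrix allows the contour to be shifted by $\delta>0$, producing the error $\bigOh(N^{-\delta}\log N)$. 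The variance follows analogously; the correction $-\Psi_1^2(\log_2 N)+\Psi_2(\log_2 N)$ arises because the square of the fluctuating part of the mean cancels the corresponding term in $\Expect\h^2$ and new periodic contributions emerge from the second moment.

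For the central limit theorem I would introduce a complex parameter $y$ into the transition matrix and study the probability generating function $\Expect y^{\h}$. Perron--Frobenius theory yields a dominant eigenvalue $\Lambda(y)$ analytic in a neighbourhood of $y=1$, so Hwang's quasi-power framework (or, equivalently, a direct L\'evy continuity argument on the characteristic function) produces asymptotic normality with the Berry--Esseen-type rate $\bigOh(1/\sqrt[4]{\log N})$; the quartic rather than square root reflects the $\bigOh(1)$ periodic fluctuation absorbed into the centering. Non-degeneracy $v_{\ell,u,d}>0$ must be verified separately to exclude accidental cancellation. For $d=1$ the state set of $\mathcal{T}$ is small enough that the stationary distribution admits a closed form in the single parameter $\lambda$, yielding $e_{\ell,u,1}=1/(w-1+\lambda)$ and, via the fundamental matrix, $v_{\ell,u,1}=(3-\lambda)\lambda/(w-1+\lambda)^3$ by direct computation.

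The nowhere-differentiability of $\Psi_1$ for $d=1$ is the most delicate ingredient. I would identify the Fourier coefficients of $\Psi_1$ with the residues of the underlying Dirichlet series at $s=2\pi ik/\log 2$, derive a binary self-similarity relation of the form $\Psi_1(x)=\tfrac12\Psi_1(x-1)+g(x)$ with $g$ explicit and smooth, and combine the two to show that every difference quotient of $\Psi_1$ is unbounded along a suitable dyadic sequence, in the spirit of Takagi-type non-differentiability proofs. The main obstacles will be twofold: first, producing the uniform contour-shift bound delivering $\delta$ for arbitrary $d$, which requires controlling the spectrum of a perturbed transition matrix whose state count grows with $d$; and second, carrying through the self-similarity argument for $\Psi_1$, which is sensitive to the arithmetic of $\ell$, $u$ and $w$ and is the reason the corresponding claims for $d\ge 2$ and for the variance fluctuation $\Psi_2$ are left open.
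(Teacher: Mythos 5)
There is a fundamental mismatch here: the statement you were asked to prove is Theorem~\ref{th:AJSF}, the existence and uniqueness of the asymmetric joint sparse form, i.e.\ that every $n\in\Z^d$ has exactly one representation over $D_{l,u}$ satisfying the syntactic conditions (1)--(3). Your proposal instead outlines a proof of Theorem~\ref{thm:asywgen} (mean, variance, asymptotic normality and non-differentiability of the fluctuation of the Hamming weight). None of the machinery you describe --- Mellin--Perron asymptotics, perturbed transition matrices, quasi-power theorems, Berry--Esseen bounds, Takagi-type self-similarity --- bears on the combinatorial claim that a digit string obeying (1)--(3) exists and is unique. Worse, the route you propose is circular for the statement at hand: the transducer of Theorem~\ref{thm:transdimd} is built from Algorithm~\ref{alg:d}, whose correctness (that it outputs \emph{the} AJSF) presupposes exactly the existence and uniqueness asserted in Theorem~\ref{th:AJSF}.

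For the record, the paper does not reprove this statement; it imports it from \cite[Theorem~6.1]{asymmdigits}. A direct proof would have to argue along the following lines: for existence, show that Algorithm~\ref{alg:d} (or Algorithm~3 of \cite{asymmdigits}) terminates on every admissible input and that its output satisfies conditions (1)--(3), checking in particular that the adjustments of non-unique digits in the inner \textbf{if}/\textbf{else} branches enforce (3a)--(3c); for uniqueness, show that conditions (1)--(3) determine each column $\eps_j$ from the residue of $n$ modulo a suitable power of $2$ --- condition (1) forces $\eps_j=0$ exactly when the current value is even, condition (2) forces the next $w-2$ columns to vanish, and condition (3) resolves the remaining ambiguity between the two digits in $D_{l,u}$ congruent modulo $2^{w-1}$ by inspecting the column $w-1$ positions higher. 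Your proposal contains neither of these arguments, so it does not establish the statement.
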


\begin{definition} The digit expansion
  described in Theorem \ref{th:AJSF} is called \emph{asymmetric joint sparse
  form} (short AJSF) of $n$ with digit set $D_{l,u}$.
\end{definition}

\begin{example}
The AJSF of $(7,11)^{T}$ with digit set $D_{-2,3}$ is
$$\begin{pmatrix}100\bar1\\1003\end{pmatrix},$$
where $\bar1$ is the digit $-1$. Thus its Hamming weight is $\h(7,11)=2$.
\end{example}

We also consider the width-$w$ nonadjacent form (cf. \cite{muirwNAF, avanzi}).

\begin{definition}
  The \emph{width-$w$ nonadjacent form} (short $\wNAF$) of an integer $n$ is a
  radix-$2$ representation $(\eps_L\ldots\eps_0)$ of $n$ with the digit set $D_w\assign\{0,\pm1,\pm3,\ldots,\pm(2^{w-1}-3),\pm(2^{w-1}-1)\}$ and the following property:
$$\text{If }\eps_i\neq 0\text{, then }\eps_{i+1}=\ldots=\eps_{i+w-1}=0.$$
\end{definition}

The AJSF is a
generalization of the $\wNAF$. In the $1$-dimensional case, only odd
digits and $0$ are used in the AJSF due to Theorem \ref{th:AJSF}. After a nonzero digit, there are $w-1$ zeros. Thus,
for $l=-2^{w-1}+1$ and $u=2^{w-1}-1$, we obtain the $\wNAF$.

It is known that the $\wNAF$ representation exists and is unique for every integer
(cf. \cite{muirwNAF}).

In \cite{asymmdigits}, Heuberger and Muir introduce the AJSF, provide an algorithm to compute it, and prove its
minimality with respect to the Hamming weight.

\begin{theorem}[\cite{asymmdigits}]
The AJSF has minimal Hamming weight among all digit expansions of an integer
vector $n$ with digit set $D_{l,u}$. Algorithm 3 in \cite{asymmdigits}
computes the AJSF in
dimension $d$ for an integer vector $n$. 
\end{theorem}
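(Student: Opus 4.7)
The plan is to prove the two assertions of the theorem in parallel, using three complementary ingredients: (a) existence of a $D_{l,u}$-expansion of $n$ satisfying the syntactic conditions~(1)--(3) of Theorem~\ref{th:AJSF}, realized by Algorithm~3; (b) uniqueness of such an expansion among all $D_{l,u}$-expansions, which is the content of Theorem~\ref{th:AJSF} itself; and (c) a local exchange argument showing that every $D_{l,u}$-expansion of~$n$ can be transformed, without increasing its Hamming weight, into one satisfying~(1)--(3). Combining the three ingredients yields both statements simultaneously: (a) together with (b) shows that Algorithm~3 produces the unique syntactically correct expansion, i.e.\ the AJSF, and (c) together with (b) shows that the AJSF is a weight minimizer among all $D_{l,u}$-expansions.

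For step~(a), I would argue by strong induction on $\|n\|_\infty$, with the base case $n=0$ producing the empty expansion. Algorithm~3 inspects the low-order $w$ bits of each coordinate of~$n$ and emits a single column $\eps_0\in D_{l,u}^d$; one must verify that all coordinates of $n-\eps_0$ are even, so that the algorithm can recurse on $(n-\eps_0)/2$, and that $\|(n-\eps_0)/2\|_\infty < \|n\|_\infty$ once $n$ is sufficiently large, which guarantees termination. Each of the case distinctions of Algorithm~3---forcing $\eps_0$ to be odd, selecting representatives in $\unique(D_{l,u})$ versus $\nonunique(D_{l,u})$, and the special handling of $\upper(D_{l,u})\cap\nonunique(D_{l,u})$---is designed precisely so that the freshly emitted column~$\eps_0$, together with the next nonzero column produced by the recursion, satisfies conditions~(1), (2) and the subclauses of~(3). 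For step~(c), I would use a local rewriting argument: given an arbitrary $D_{l,u}$-expansion $(\eta_L\ldots\eta_0)$ of~$n$, locate the lowest position~$j$ at which some clause of~(1)--(3) is violated and apply a replacement on a window of at most $w+1$ columns starting at~$j$ that preserves the represented vector, does not increase the Hamming weight, and strictly raises the position of the lowest violation. Standard rewrites include halving an even nonzero column and shifting it upward, merging two too-close nonzero columns coordinate by coordinate (which stays in $D_{l,u}$ because the digit set is a block of contiguous integers containing~$0$), and replacing a pair at distance $w-1$ by its canonical form from the recursion implicit in Algorithm~3.

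The main obstacle is the case analysis in step~(c), specifically the interplay between clauses~(3a), (3b), (3c), which impose delicate compatibility constraints on a pair of nonzero columns at distance exactly $w-1$ depending, coordinate by coordinate, on whether the digits lie in $\unique(D_{l,u})$, in $\nonunique(D_{l,u})$, or in $\upper(D_{l,u})\cap\nonunique(D_{l,u})$, and on their residues modulo $2^{w-1}$. Designing a rewrite that repairs a violation of~(3c) at one position without creating a fresh violation of~(3b) or~(2) two positions further up the expansion requires a carefully chosen potential function (for instance, a lexicographic pair consisting of the Hamming weight and the position of the lowest violation). The cleanest way to organize this is probably to view Algorithm~3 as a right-to-left sequential transducer on the binary input and to interpret the rewrites as the inverses of its transitions, so that termination of~(c) is controlled by the finite transducer depth and correctness reduces to checking a single invariant per transition. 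This transducer viewpoint also matches the construction carried out in Section~\ref{sec:constr-trans} and thereby reuses the same combinatorial data on which the probabilistic analysis is later built.
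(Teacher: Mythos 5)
You should first be aware that the paper does not prove this statement at all: it is quoted verbatim from \cite{asymmdigits}, where minimality is established via a colexicographic argument (the AJSF is shown to be the colexicographically minimal expansion, and colexicographic minimality is shown to imply weight minimality). Your proposal therefore cannot match ``the paper's proof''; it must stand on its own. As a stand-alone argument it is a reasonable skeleton --- existence via the algorithm, uniqueness via Theorem~\ref{th:AJSF}, minimality via local rewriting --- but step~(c), which carries the entire content of the minimality claim, is only announced, not executed. You explicitly defer the case analysis of clauses (3a)--(3c) to a ``carefully chosen potential function'' without exhibiting the rewrites or verifying any invariant, so the proposal is a plan rather than a proof.

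Moreover, one of the few concrete claims you do make in step~(c) is false. You assert that merging two nonzero columns that are too close ``stays in $D_{l,u}$ because the digit set is a block of contiguous integers containing $0$.'' Take $D_{0,3}$, so $w=3$, and a coordinate with digit $3$ at position $j$ and digit $3$ at position $j+1$ (violating condition~(2)). The merged value is $3+2\cdot 3=9\notin D_{0,3}$; any repair must change the residue representative and propagate a carry upward, and that carry can create a fresh nonzero column above the window, so neither the weight bound nor the ``lowest violation moves up'' claim is automatic. Handling exactly this carry propagation is where the difficulty of the theorem lives, and it is why \cite{asymmdigits} routes the argument through colexicographic minimality (where the effect of a carry on the order is controlled) rather than through ad hoc window rewrites. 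Your closing suggestion to interpret the rewrites as inverse transducer transitions is closer in spirit to a workable proof, but as written the proposal has a genuine gap at its central step.
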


 We present a slightly modified version of Algorithm 3 in \cite{asymmdigits} as
Algorithm~\ref{alg:d}. The modification takes into account that we are only interested in
the weight. Furthermore, those iterations of the while loop where the output is
already predetermined are skipped.

For simplicity, we write $n+a$, for a vector $n$ and an integer $a$, to denote that we add $a$ to every coordinate of the vector $n$.  

\begin{algorithm}
      \caption{Algorithm to compute the weight of the AJSF with digit set $D_{l,u}$}
      \label{alg:d}
      \begin{algorithmic}[1]
        \Require A vector of integers $n\in \Z^{d}$, integers $l\leq 0$, $u>0$, $n\geq 0$ if $l=0$
        \Ensure Weight $\h(n)$
        \State $h=0$
        \While{$n\neq 0$}
          \If{$n\equiv 0 \mod 2$}\label{alg:dluif1}
            \State $a=0$, $h=h+0$
            \State $m=\frac {n} 2$
          \Else\label{alg:dluelse1}
            \State $a=l+((n- l)\bmod 2^{w-1}$)
            \State $h=h+1$
            \State $m=\frac{n- a}{2^{w-1}}$
            \State $I_{\unique}=\{j\in\{1,2,\ldots,d\}\mid a_j\in\unique(D_{l,u})\}$
            \State $I_{\nonunique}=\{j\in\{1,2,\ldots,d\}\mid a_j\in\nonunique(D_{l,u})\}$
            \If{$m_j\equiv 0\mod 2$ for all $j\in I_{\unique}$}\label{alg:mjif}
            	\For{$j\in I_{\nonunique}$ such that $m_j$ is odd}
            		\State $a_j=a_j+2^{w-1}$
            		\State $m_j=m_j-1$
            	\EndFor
            \Else\label{alg:mjelse}
            	\For{$j\in I_{\nonunique}$ such that $m_j\equiv u+1\mod 2^{w-1}$}
            		\State $a_j=a_j+2^{w-1}$
            		\State $m_j=m_j-1$
            	\EndFor
            \EndIf
          \EndIf
          \State $n=m$
        \EndWhile
        \State\Return $h$
      \end{algorithmic}
    \end{algorithm}

The \textbf{if} branch in line~\ref{alg:dluif1} of Algorithm~\ref{alg:d}
makes the digit at the current position a zero column if possible. If this is not
possible, the \textbf{else} branch in line~\ref{alg:dluelse1} chooses the
smallest digit in each component which is congruent to the input. In the inner \textbf{if} and
\textbf{else} branches, the algorithm checks if we should change any non-unique
digits. In the \textbf{if} statement in line~\ref{alg:mjif}, we check whether
we can make the $(w-1)$-st digit after the current digit $0$. Otherwise, in the \textbf{else} statement in line~\ref{alg:mjelse}, we check 
whether we can increase the redundancy at the $(w-1)$-st digit after the
current digit by changing any non-unique digits at the current position.

In the $1$-dimensional case, we can further simplify
Algorithm~\ref{alg:d}. If $I_{\unique}\neq\emptyset$, then $I_{\nonunique}=\emptyset$. Thus the \textbf{else} branch in
line~\ref{alg:mjelse} will not be processed. Algorithm~\ref{alg:d1} is the
simplified version for the $1$-dimensional case.

 \begin{algorithm}
      \caption{Algorithm to compute the weight of the $1$-dimensional AJSF with digit set $D_{l,u}$}
      \label{alg:d1}
      \begin{algorithmic}[1]
        \Require Integers $n$, $l\leq 0$, $u>0$, $n\geq 0$ if $l=0$
        \Ensure $\h(n)$
        \State $h=0$
        \While{$n\neq 0$}
          \If{$n\equiv 0 \mod 2$} 
            \State $a=0$
            \State $h=h+0$
            \State $m=\frac n 2$
          \Else
            \State $a=l+((n-l)\bmod 2^{w-1}$)
            \State $h=h+1$
            \State $m=\frac{n-a}{2^{w-1}}$
            \If{$m\equiv 1\mod 2$ \textbf{and} $(n-l)\bmod 2^{w-1}\leq u-l-2^{w-1}$}\label{alg:1mjif}
              \State $a=a+2^{w-1}$
              \State $m=m-1$
            \EndIf
          \EndIf
          \State $n=m$
        \EndWhile
        \State\Return $h$
      \end{algorithmic}
    \end{algorithm}

\section{Construction of the transducers}\label{sec:constr-trans}
In this section, we describe the construction of the transducers for the
computation of the Hamming weight. We start with the easiest
case, the $\wNAF$. We will then modify the ideas to deal with the asymmetric case
of $D_{l,u}$-expansions in dimension $1$. We finally generalize the approach to
the $d$-dimensional $D_{l,u}$-expansions.

All transducers and automata take a (joint) binary expansion as input and read from
right to left. The output of the transducers is a sequence of $0$'s and $1$'s. Then
the computed Hamming weight is the number of $1$'s in this output.
\begin{figure}
 		  \centering
      \includegraphics{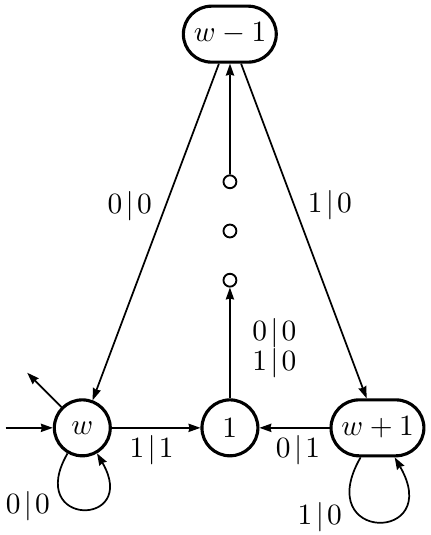}
      \caption{Transducer to compute the Hamming weight of a $\wNAF$ representation.}
      \label{aut:wNAF}
    \end{figure}
    \begin{lemma}
      Let $w\geq2$. The transducer in Figure~\ref{aut:wNAF} calculates the weight $\h(n)$ of the $\wNAF$ of an integer $n$.
    \end{lemma}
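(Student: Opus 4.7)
The plan is to verify that the transducer in Figure~\ref{aut:wNAF}, when run on the binary expansion of $n$ read from least to most significant bit (with sufficiently many padding zeros on the high end), produces an output word whose number of $1$'s equals the Hamming weight of the $\wNAF$ of $n$. My approach is to exhibit a state invariant that links each state of the transducer with the intermediate configuration of the $\wNAF$ algorithm, and then to verify every transition.

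First I would describe the semantic meaning of each state. Since the $\wNAF$ digit at position $i$ is determined by the bits $b_i,\ldots,b_{i+w-1}$ of (the carry-adjusted) $n$, the only finite information that must be propagated while scanning bit by bit from the right consists of (i) a carry $c\in\{0,1\}$ generated when a negative digit has been emitted, and (ii) a counter $k\in\{0,1,\ldots,w-1\}$ indicating how many forced zeros remain after the most recent nonzero emission, together with the partial window of bits read so far inside the potential next digit. I would identify each state of Figure~\ref{aut:wNAF} with such a tuple.

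Next I would verify the transitions case by case. In states with $k>0$ the output must be $0$, the counter is decremented and the input bit is shifted into the pending window. In states with $k=0$ the effective bit $b\oplus c$ decides whether a $\wNAF$ digit is emitted at this position: if it is $0$, the transducer outputs $0$ and the carry is cleared; if it is $1$, the transducer outputs $1$, sets $k=w-1$, and, once the following bits reveal whether the new digit is negative, updates $c$ accordingly. A direct induction on $i$ then shows that after reading $i$ bits of $n$ the number of $1$'s emitted equals the number of nonzero $\wNAF$ digits among $\eps_0,\ldots,\eps_{i-1}$.

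The main obstacle is handling the boundary behaviour at the end of the input: once the leading $1$ of $n$ has been consumed, the transducer may still be inside a window and must receive enough padding zeros to flush exactly one pending nonzero $\wNAF$ digit (or none, depending on the residual state). Verifying that the combined carry--window information in the state faithfully simulates Algorithm~\ref{alg:d1} at the transition where the binary length of $n$ can differ from the $\wNAF$ length is the most delicate point; once this is in place the remaining check is essentially mechanical.
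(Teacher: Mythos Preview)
Your approach---exhibit a state invariant linking the transducer's configuration to the running carry and window position of the $\wNAF$ algorithm, then verify each transition---is exactly the line the paper takes, only stated more formally. The paper's proof is a short informal walkthrough: it observes that at the looping state one reads bits until an effective $1$ appears, then outputs a single $1$, passes through $w-1$ forced-zero states, and on the last of these the incoming bit alone determines whether the emitted $\wNAF$ digit was negative and hence whether a carry is set.

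One point where your description diverges from the actual transducer: you posit that a state records ``the partial window of bits read so far inside the potential next digit''. The transducer in Figure~\ref{aut:wNAF} has only $w+1$ states (its characteristic polynomial in Lemma~\ref{lem:charpoly} has degree $w+1$), so it cannot store any window bits---only a counter through the $w-1$ forced zeros and, at the end, a single carry bit. The reason this suffices is specific to the symmetric digit set: the sign of the emitted digit, and hence the outgoing carry, depends solely on bit $n_{w-1}$, not on the intermediate bits $n_1,\ldots,n_{w-2}$. Your invariant would still go through (the ``extra'' window information simply collapses), but recognising this up front both explains the small state count and removes the need to track a window at all. With that simplification your plan coincides with the paper's argument.
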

    \begin{proof}
      Let $n=\val(n_{L}\ldots n_{0})$ with $n_j\in \{0,1\}$ be the standard binary
      expansion of $n$ and $(\eps_K\ldots \eps_0)$ be the $\wNAF$
      representation of $n$. If $n\equiv 0 \mod 2$, then $\eps_0=0$ and we
      stay in the initial state. Otherwise, we have $\eps_0\neq 0$ and the
      weight is
      $\h(\eps_0)=1$. Since we have a $\wNAF$ representation, the next $w-1$
      digits fulfill $\eps_1=\eps_2=\ldots =\eps_{w-1}=0$, no matter what the
      corresponding $n_j$, $j=1,\ldots,w-1$ are. The sign of the digit
      $\eps_0$ depends on $n_{w-1} \bmod 2$. If $n_{w-1}\equiv 0\mod 2$, then
      $\eps_0 >0$ and we go to state $w$ with the next input
      $\frac{n-\eps_0}{2^w}$ with carry $0$. If $n_{w-1}\equiv 1\mod 2$, then $\eps_0<0$ and
      we therefore have a carry of $1$ and go to state $w+1$. There, reading an input of $1$ and
having a carry of $1$ results in the same outcome as reading $0$, but the carry
remains $1$. Reading an input $0$ with carry $1$ is equivalent to
reading an input $1$ with a carry $0$, so we are in state $1$ again.
    \end{proof}

In the next step, we construct a transducer for the Hamming weight of the $1$-dimensional
AJSF. Therefore, we
need the following automaton to compare integers.

\begin{lemma}
Automaton~\ref{aut:diffcomp} in Figure~\ref{aut:diffcomp} accepts the input of
three integers $a,b,c$ if and only if $a+b\leq c$. The binary expansions of
$a,b$ and $c$ must have the same length where leading $0$'s are allowed.
\end{lemma}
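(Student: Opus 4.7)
The plan is to reformulate $a+b\leq c$ as the nonnegativity of $c-a-b$ and then show that the automaton, reading bits from the least to the most significant position, tracks precisely the borrow that arises when performing this subtraction bit by bit. Writing $a=\sum_{i=0}^{L-1} a_i 2^i$ and analogously for $b$ and $c$, I would define a borrow sequence by $B_0=0$ and
\[
B_{i+1}=\left\lceil \frac{a_i+b_i+B_i-c_i}{2}\right\rceil.
\]
A direct case distinction shows $B_i\in\{0,1,2\}$ for every $i$, so only finitely many borrow values can ever occur; this is the one point that is easy to overlook, since one is subtracting two numbers from a single number and the naive intuition suggests borrows in $\{0,1\}$.

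The key inductive claim is that, after reading the first $i$ bits of each of $a,b,c$, we have
\[
\bigl(c \bmod 2^i\bigr)-\bigl(a\bmod 2^i\bigr)-\bigl(b\bmod 2^i\bigr)=R_i-2^i B_i
\]
with $0\leq R_i<2^i$. The base case $i=0$ is trivial, and the induction step follows from the defining recursion for $B_{i+1}$ together with the elementary identity $a_i+b_i+B_i-c_i=2B_{i+1}-r$ for a suitable $r\in\{0,1\}$. Taking $i=L$, we conclude that $c-a-b\geq 0$ if and only if $B_L=0$.

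It then remains to identify the states of Automaton~\ref{aut:diffcomp} with the three possible borrow values, taking $B=0$ as both the initial and the unique accepting state, and to verify that for each state and each input triple $(a_i,b_i,c_i)\in\{0,1\}^3$ the transition drawn in Figure~\ref{aut:diffcomp} coincides with the recursion for $B_{i+1}$. This is twenty-four routine checks. The only conceptual obstacle is the observation that the borrow takes three rather than two values; everything else is bookkeeping.
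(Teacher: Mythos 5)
Your borrow invariant is correct: with $B_0=0$ and $B_{i+1}=\lceil(a_i+b_i+B_i-c_i)/2\rceil$ one indeed obtains $(c\bmod 2^i)-(a\bmod 2^i)-(b\bmod 2^i)=R_i-2^iB_i$ with $0\leq R_i<2^i$, hence $a+b\leq c$ if and only if $B_L=0$, and the observation that the borrow ranges over $\{0,1,2\}$ rather than $\{0,1\}$ is exactly the point that must not be overlooked. The paper proves the lemma by an analogous running invariant, but it keeps two separate components: a state is a pair $(s,t)$ with $s,t\in\{0,1\}$, where $s$ is the carry of the addition $a+b$ after $i$ digits and $t$ is the Iverson bracket $\left[(a+b)\bmod 2^i>c\bmod 2^i\right]$, with acceptance at $(0,0)$. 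A short computation shows that your borrow is precisely $B=s+t$, so the two invariants encode the same acceptance condition.

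The gap is in your final identification step. Automaton~\ref{aut:diffcomp} has \emph{four} states $(s,t)$, not three: the states $(1,0)$ and $(0,1)$ both correspond to borrow $1$ and are deliberately kept distinct (see the Remark following the lemma; the individual components $s$ and $t$ are needed separately in the constructions of Theorems~\ref{th:existtrans1} and~\ref{thm:transdimd}). Consequently, ``identify the states of the automaton with the three possible borrow values'' cannot be carried out as a bijection, and the twenty-four transition checks you propose are checks of a quotient automaton rather than of the automaton in Figure~\ref{aut:diffcomp}. To close the gap you would either verify the two-component invariant directly against all $4\times 8$ transitions of the figure, or show that $(s,t)\mapsto s+t$ is an automaton homomorphism onto your three-state borrow automaton and note that $(0,0)$ is the unique preimage of the accepting borrow value $0$. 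Either repair is routine, but one of them is needed.
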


\begin{remark}
Automaton~\ref{aut:diffcomp} could be simplified by combining 
the input $a+b$ and merging the states $(1,0)$ and $(0,1)$. But since we use
this automaton in Theorems~\ref{th:existtrans1} and~\ref{thm:transdimd} with
the input of the transducer as $a$ and some fixed parameter as $b$, this
would complicate the constructions in Theorems~\ref{th:existtrans1} and~\ref{thm:transdimd}.
\end{remark}

\begin{proof}
The states are $(s,t)$ with $s$, $t\in\{0,1\}$. The label $s$ signifies the carry of the addition $a+b$ which still has to be processed. The label $t$ corresponds to the truth value of the expression $(a+b)\bmod2^i > c\bmod 2^i$ where $i$ is the number of read digits up to now. So the automaton accepts the input if it stops in state $(0,0)$ where there is no carry anymore and $a+b > c$ is false. The initial state is $(0,0)$. 
    
    Therefore, there is a path from $(0,0)$ to $(s,t)$ in Automaton~\ref{aut:diffcomp} with input label $$\begin{pmatrix}\alpha_{i-1}\ldots\alpha_0\\\beta_{i-1}\ldots\beta_0\\\gamma_{i-1}\ldots\gamma_0\end{pmatrix}$$
    if and only if 
    $$s=\left\lfloor\frac{\val(\alpha_{i-1}\ldots\alpha_0)+\val(\beta_{i-1}\ldots\beta_0)}{2^i}\right\rfloor$$
    and $t=\left[\left(\val(\alpha_{i-1}\ldots\alpha_0)+\val(\beta_{i-1}\ldots\beta_0)\right)\bmod2^i>\val(\gamma_{i-1}\ldots\gamma_0)\right]$. Here, we use Iverson's notation, that is $[$\emph{expression}$]$ is $1$ if \emph{expression} is true and $0$ otherwise. From this, the rules for the transitions follow. There is a transition $(s,t)\xrightarrow{(\alpha,\beta,\gamma)^T}(s',t')$ if and only if $s'=\left\lfloor\frac{\alpha+\beta+s}2\right\rfloor$ and $t'=\left[(\alpha+\beta+s)\bmod 2>\gamma-t\right]$. 
    
    \begin{figure}
      \centering
      \includegraphics{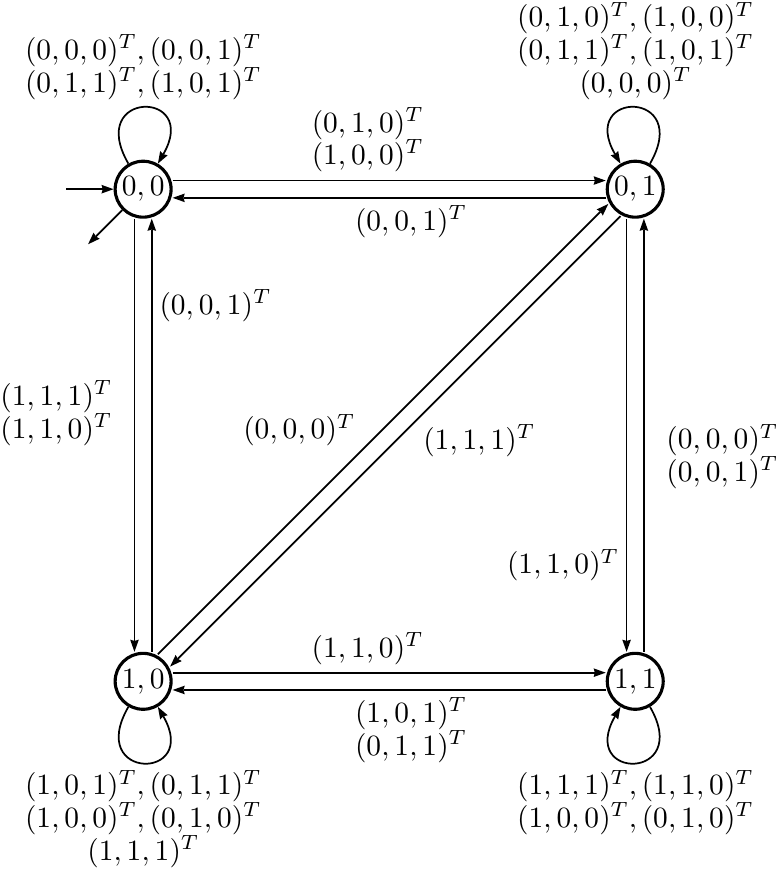}
      \caption{Automaton~\ref{aut:diffcomp} to compare three integers $a$, $b$ and $c$, accepts if $a+b\leq c$.}
      \label{aut:diffcomp}
    \end{figure}
\end{proof}

\begin{theorem}\label{th:existtrans1}
  There exists a transducer with input and output alphabet $\{0,1\}$, having less than
  $4w-2$ states, where one state is initial and final, that computes
  the Hamming weight of the AJSF from the binary expansion of an integer.
\end{theorem}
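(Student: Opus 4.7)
The plan is to adapt the $\wNAF$ transducer of Figure~\ref{aut:wNAF} by incorporating the bookkeeping needed to simulate the conditional branch in line~\ref{alg:1mjif} of Algorithm~\ref{alg:d1}. The transducer reads the binary expansion of $n$ from least to most significant bit and outputs one bit per input bit; the Hamming weight is the number of $1$'s in the output.

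First, I would lay out the states. A state encodes (i) a carry $c \in \{0,1\}$ coming from lower positions (arising when Algorithm~\ref{alg:d1} chose a negative digit or added $2^{w-1}$), (ii) the position $j \in \{0,1,\ldots,w-1\}$ within the mandatory run of $w-1$ zeros that follows each nonzero AJSF digit, where $j=0$ means ``outside a run'', and (iii) partial information about the comparison $(n - l) \bmod 2^{w-1} \leq u - l - 2^{w-1}$ that governs line~\ref{alg:1mjif}. The unique state with $c=0$ and $j=0$ is declared both initial and final.

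Next, I would specify the transitions. From the initial state, reading $0$ outputs $0$ and loops; reading $1$ outputs $1$, marking an odd AJSF digit at this position, and advances to a skip state with $j=1$. Throughout the skip chain ($j=1,\ldots,w-2$), every input bit is output as $0$, and a product of the chain with the automaton of Figure~\ref{aut:diffcomp}, specialised so that its second and third inputs are fixed by the binary expansions of $-l$ and $u - l - 2^{w-1}$ (whose bits are absorbed into the transition function), updates the comparison bookkeeping. At $j=w-1$, the current input bit, the carry $c$, and the now-completed comparison jointly determine the new least significant bit of the shifted integer; this decides whether the transducer outputs $1$ and re-enters the skip chain, with a new carry determined by the sign of the chosen digit and the add-$2^{w-1}$ decision, or outputs $0$ and returns to the initial state.

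A count of states then yields at most two carry values $c$ and at most two reachable comparison sub-states for each skip position $j \in \{1,\ldots,w-1\}$, since only two of the four configurations $(s,t)$ of Automaton~\ref{aut:diffcomp} actually arise along a chain entered from the initial state. With the one out-of-run state, the total is at most $1 + 2 \cdot 2 \cdot (w-1) = 4w-3 < 4w-2$. The main obstacle I anticipate is the verification that each iteration of the while loop of Algorithm~\ref{alg:d1} corresponds to exactly one passage from the initial state through a skip chain and back, with correct emission of the $1$ and correct propagation of the carry; in particular one must check which two of the four comparison configurations are reachable at each skip position and that the end-of-chain transition faithfully reproduces the joint decision of the sign of the digit and the conditional addition of $2^{w-1}$.
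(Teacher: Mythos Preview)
Your approach is essentially the paper's: simulate Algorithm~\ref{alg:d1} by a transducer that takes a product with Automaton~\ref{aut:diffcomp} (specialised to the fixed inputs $\tilde l=-l$ and $\tilde u=u-l-2^{w-1}$) along a chain of $w-1$ skip positions, with looping behaviour outside the chain. The construction is correct in outline, and the obstacle you flag at the end is exactly the verification the paper carries out via the explicit transition list.

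The state count, however, does not go through as written. First, there must be \emph{two} out-of-run states, not one: after leaving the skip chain the carry may be $0$ or $1$, and these states behave differently (carry $0$ loops on input $0$, carry $1$ loops on input $1$); your phrase ``returns to the initial state'' should read ``returns to the looping state $s'=\lfloor(s+\eps)/2\rfloor$''. Second, the assertion that only two of the four configurations $(s,t)$ of Automaton~\ref{aut:diffcomp} are reachable at each skip position is unjustified and generally false: from one state in row $i$ the two possible input bits lead to two states in row $i+1$, so by row~$3$ all four $(s,t)$ may occur. With two looping states and $4(w-1)$ block states one gets exactly $4w-2$, not strictly less. The reduction the paper uses is different and specific to row~$1$: the transitions $0\xrightarrow{1\mid1}(s,t)_1$ and $1\xrightarrow{0\mid1}(s,t)_1$ have the \emph{same} target, because in both cases the effective least significant bit fed into Automaton~\ref{aut:diffcomp} is $1$. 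Hence row~$1$ contains only one accessible state, and the total is at most $2+1+4(w-2)=4w-5<4w-2$.
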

\begin{proof}
 We construct a transducer performing the same calculation as
 Algorithm~\ref{alg:d1}. It will look similar to the transducer in
 Figure~\ref{aut:wNAF}. We start at state $0$. Then there is a
 vertical block of states with $w-1$ rows having states $(0,0)_i$, $(1,0)_i$,
 $(0,1)_i$ and $(1,1)_i$ in each row $i=1,\ldots,w-1$. After this block, we either go back to state $0$, or to a similar state $1$, or again to the block of states (see Figure~\ref{aut:asydigd1F}). We call the states $0$ and $1$ the looping states. Their labels signify the carry which is to be processed. The state $0$ is also the final state.
    
    The block of states corresponds to the \textbf{if} statement in line~\ref{alg:1mjif} in Algorithm~\ref{alg:d1}. In this line, we have to check the inequality $(n-l)\bmod 2^{w-1}\leq u-l-2^{w-1}$. A first step to this aim is to compare $n+\tilde l\leq\tilde u$ with $\tilde l\assign -l$ and $\tilde u\assign u-l-2^{w-1}$. Therefore, we use Automaton~\ref{aut:diffcomp}.

		Next, we examine the binary expansions of $\tilde u$ and
                $\tilde l$. Since we have assumed that $l>-2^{w-1}$, we know
                that the length of the binary expansion of $\tilde l$ is at
                most $w-1$. Furthermore, $-1\leq \tilde u<2^{w-1}$. In the case
                $\tilde u=-1$, the set $\nonunique(D_{l,u})$ is empty and we
                have no choices for the digits. We will return to this case
                later. Then the length of the binary expansion of $\tilde u$
                is at most $w-1$. Let $(l_{w-2}\ldots l_0)$ and
                $(u_{w-2}\ldots u_0)$ be the binary expansions of $\tilde l$
                respectively $\tilde u$.
		
		Now we can verify $n+\tilde l\bmod 2^{w-1}\leq\tilde u$ by
                checking the label $t$ of the state $(s,t)$ after reading
                $w-1$ digits from the binary expansion of $(n,\tilde l,\tilde
                u)^T$ in Automaton~\ref{aut:diffcomp}. If $t=0$, then the
                inequality is true, otherwise it is false. Since the length of
                $\tilde u$ is less than or equal to $w-1$, there are no digits of $\tilde u$ left. Only a possible carry of the addition $n+\tilde l$ is left. This carry is the label $s$ of the current state $(s,t)$. Therefore, we have checked $n+\tilde l\bmod 2^{w-1}\leq \tilde u$. To ensure that we read exactly $w-1$ digits, the transducer in Figure~\ref{aut:asydigd1F} has $w-1$ copies of the four states of Automaton~\ref{aut:diffcomp}. The transitions start in a state of the $i$-th copy and go to an appropriate state of the $(i+1)$-th copy while reading the $i$-th digit of the expansion.
		
		In the \textbf{if} statement in line~\ref{alg:1mjif} in
                Algorithm~\ref{alg:d1}, we must also check the other condition
                $m\equiv 1\mod 2$. Let $(s,t)_{w-1}$ be the current state
                at the end of the block of states. We know that
                $m=\frac{(n+\tilde l)-(n+\tilde l)\bmod 2^{w-1}}{2^{w-1}}$. Therefore, the least significant digit of $m$ is simply the
                next digit of the addition $n+\tilde l$. Since there are no
                digits of the expansion of $\tilde l$ left, we only have to
                look at the next digit $\eps$ of $n$ and consider the carry $s$. Thus we have $m\equiv s+\eps\mod 2$. 
		
		If the inequality of the \textbf{if} statement is satisfied,
                that is if $t=0$, then whatever digit $\eps$ we read next, the
                transducer starts from a looping state again. If $m$ is
                even, then the next written digit is $0$ anyway. If $m$ is
                odd, we can change the digit in the representation (because it
                is non-unique) and $m$ becomes even too. We only have to
                remember the carry. If $s=0$ or $s=1$ and we read $\eps=0$,
                then there will be no carry propagation and we continue with state $0$. If $s=1$ and we read $\eps=1$, then there is a carry propagation and we start at state $1$.
		
		If the inequality is not satisfied, that is if $t=1$, and $m\equiv s+\eps\mod2$ is odd, then we have to start with the $w-1$ transitions of Automaton~\ref{aut:diffcomp} immediately. If $m$ is even however, then the transducer starts from a looping state again. In both cases, we have to consider the carry propagation as well.
		
		At state $s\in\{0,1\}$, we stay in state $s$ as long as we read
                $s$. If we read $1-s$ we start with the $w-1$ transitions of
                Automaton~\ref{aut:diffcomp}.

In the case $\tilde u=-1$, the set $\nonunique(D_{l,u})$ is empty. Therefore,
we have $t=1$ in each state, and the initial state of Automaton~\ref{aut:diffcomp} has
to be $(0,1)$. Let $(u_{w-2}\ldots u_{0})=(0^{w-1})$. Then we have a transition from $s$ to $(s',t')_{1}$ with
input label $1-s$ if and only if there is a transition from $(s,1)$ to
$(s',t')$ with input label $(1-s,l_{0},u_{0})^{T}$ in Automaton~\ref{aut:diffcomp}.
    
    To summarize, we have the following transitions in the transducer in
    Figure~\ref{aut:asydigd1F} for $s$, $s'$, $t$, $t'$, $\eps\in\{0,1\}$ and $i\in\{1,\ldots,w-2\}$:
    \begin{itemize}
    	\item $s\xrightarrow{\eps\mid0}s$ if $s=\eps$
    	\item $s\xrightarrow{\eps\mid1}(s',t')_1$ if $s\neq\eps$ and $(s,[\tilde u=-1])\xrightarrow{(\eps,l_0,u_0)^T}(s',t')$ is a transition in Automaton~\ref{aut:diffcomp}
    	\item $(s,t)_i\xrightarrow{\eps\mid0}(s',t')_{i+1}$ if $(s,t)\xrightarrow{(\eps,l_i,u_i)^T}(s',t')$ is a transition in Automaton~\ref{aut:diffcomp}
    	\item $(s,t)_{w-1}\xrightarrow{\eps\mid0}s'$ if $t=0$ or $\eps+s\equiv 0\mod 2$, and $s'=\lfloor\frac{\eps+s}2\rfloor$
    	\item $(s,t)_{w-1}\xrightarrow{\eps\mid1}(s',t')_1$ if $t=1$, $\eps+s\equiv 1\mod 2$ and $(s,[\tilde u=-1])\xrightarrow{(\eps,l_0,u_0)^T}(s',t')$ is a transition in Automaton~\ref{aut:diffcomp}.
    \end{itemize}
    
    We note that there is only one accessible state in the first row because the transitions $0\xrightarrow{1\mid1}(s,t)_1$ and $1\xrightarrow{0\mid1}(s,t)_1$ have both the same target state. This target state depends on $l$ and $u$.
    
    \begin{figure}
      \centering
      \includegraphics{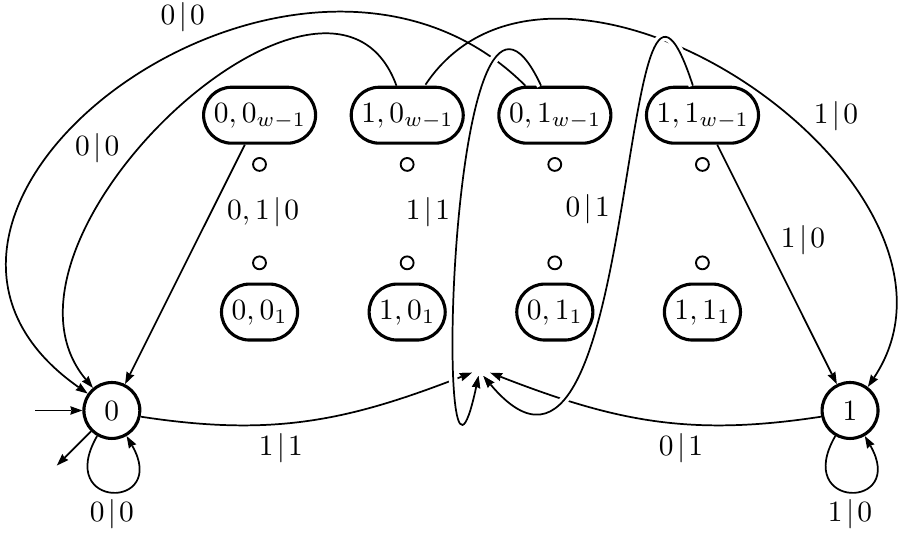}
      \caption{Transducer to compute the weight of the AJSF with digits in
        $D_{l,u}$. The transitions into and inside the block of states depend on $l$
        and $u$.}
      \label{aut:asydigd1F}
    \end{figure}

Finally, we restrict the transducer to the states which are actually
accessible from the initial state.
\end{proof}

Now we can describe the last state of the path with input label
$(\eps_{L}\ldots\eps_{0})$, a binary expansion. The following lemma can easily
be proved
by induction.

\begin{lemma}\label{lem:existtrans1}
 Let
  $k_{i}$, $s_{i}$, $t_{i}$ for $i\geq0$ and $a_{i}$, $f_{i}$ for $i\geq1$ be
  sequences with $s_{0}=0$,
  $t_{0}=1$.  The states $(s_{i},t_{i})_{w-1}$ are the states in the last row of the
 path. The integers $k_{i}$ count how often we circle in a looping state
 after the state $(s_{i}, t_{i})_{w-1}$. The integers $f_{i}$ are the positions of
 the nonzeros in the AJSF and
 $a_{i}$ is the digit at position $f_{i}$. For $\tilde u\geq0$, these
  sequences satisfy the following recursions for $i\geq1$:
\begin{align*}
  k_{i}&=\begin{cases}
    \max\{k\in\N\mid
    (\eps_{f_{i}+k+w-2}\ldots\eps_{f_{i}+w-1})=(0^{k-1}1)\text{
    or }(0^{k})\}&\text{
    if }s_{i}=t_{i}=0,\\
  \max\{k\in\N\mid
    (\eps_{f_{i}+k+w-2}\ldots\eps_{f_{i}+w-1})=(1^{k})\text{
    or }(0^{k})\}&\text{
    if }s_{i}=1,\,t_{i}=0,\\
\max\{k\in\N\mid
    (\eps_{f_{i}+k+w-2}\ldots\eps_{f_{i}+w-1})=(0^{k})\}&\text{
    if }s_{i}=0,\,t_{i}=1,\\
\max\{k\in\N\mid
    (\eps_{f_{i}+k+w-2}\ldots\eps_{f_{i}+w-1})=(1^{k})\}&\text{
    if }s_{i}=t_{i}=1,\\
    \end{cases}\\
  f_{i}&=k_{0}+\cdots+k_{i-1}+(i-1)(w-1),\\
s_{i}&=\left\lfloor\frac{\val(\eps_{f_{i}+w-2}\ldots\eps_{f_{i}+1}1)+\tilde l}{2^{w-1}}\right\rfloor,\\
t_{i}&=\left[\left(\val(\eps_{f_{i}+w-2}\ldots\eps_{f_{i}+1}1)+\tilde
    l\right)\bmod 2^{w-1}>\tilde u\right],\\
a_{i}&=-\tilde l+(\val(\eps_{f_{i+1}-1}\ldots\eps_{f_{i}+1}1)+\tilde l\bmod 2^{k_{i}+w-1}).
\end{align*}

Then we have 
$$\val(\eps_{f_{i}+w-2}\ldots\eps_{f_{i}+1}1)=a_{i}+2^{w-1}s_{i}(1-[s_{i}=1\wedge
t_{i}=0\wedge \eps_{f_{i}+w-1}=0]).$$
There is a path  from $0$ to $(s,t)_{j}$ with input label
$(\eps_{L}\ldots\eps_{0})$ if and only if $j=L-f_{i}+1$, $f_{i}\leq
L\leq f_{i}+w-2$ and
\begin{align*}
s&=\left\lfloor\frac{\val(\eps_L\ldots\eps_{f_{i}+1}1)+\val(l_{j-1}\ldots
    l_{0})}{2^{j}}\right\rfloor,\\
t&=\left[\left(\val(\eps_{L}\ldots\eps_{f_{i}+1}1)+\val(l_{j-1}\ldots
  l_{0})\right)\bmod 2^{j}>\val(u_{j-1}\ldots
u_{0})\right].
\end{align*}

There is a path from $0$ to $s$ with input label $(\eps_{L}\ldots\eps_{0})$ if
and only if $f_{i}+w-1\leq L\leq
f_{i+1}-1$ and $s=s_{i}$ or $s=0, s_{i}=1$ and
$(\eps_{L}\ldots\eps_{f_{i}+w-1})=(0^{L-f_{i}-w+2})$.

For $\tilde u=-1$ the only difference is $t_{i}=1$ and $t=1$.
\end{lemma}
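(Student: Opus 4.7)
The plan is to prove all five claims simultaneously by induction on the block index $i$, using the convention $f_0 := -(w-1)$ together with the given initial values $s_0 = 0$, $t_0 = 1$. With this convention the $k_0$-recursion (case $s_0 = 0$, $t_0 = 1$) returns the number of trailing zeros of the binary input, which matches the fact that the transducer starts at state $0$ and self-loops on $\eps_0 = \cdots = \eps_{k_0 - 1} = 0$ before firing $0 \xrightarrow{1 \mid 1} (s_1, t_1)_1$ on $\eps_{k_0} = 1$. This is the base case; it simultaneously establishes claim~4 for the range $0 \leq L \leq f_1 - 1$.

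In the inductive step, assume that a path from $0$ to $(s_i, t_i)_{w-1}$ reads $\eps_{f_i + w - 2} \ldots \eps_0$ with $s_i, t_i$ as claimed. Reading $\eps_{f_i + w - 1}$ triggers the transition out of $(s_i, t_i)_{w-1}$. The four cases $(s_i, t_i) \in \{0,1\}^2$ correspond precisely to the four branches of the $k_i$-recursion: in each case the transducer enters a fixed looping state in $\{0,1\}$ that absorbs a maximal run of matching digits, re-entering block row~$1$ on the first non-matching digit. Direct inspection yields the claimed $k_i$ and hence $f_{i+1} = f_i + k_i + (w-1)$. The re-entry transition reads $\eps_{f_{i+1}}$ via $(s_i, [\tilde u = -1]) \xrightarrow{(\eps_{f_{i+1}}, l_0, u_0)^T} (s', t')$ in Automaton~\ref{aut:diffcomp}, and the subsequent $w-2$ transitions through block rows $2, \ldots, w-1$ are, by construction in Theorem~\ref{th:existtrans1}, further transitions of Automaton~\ref{aut:diffcomp}. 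Invoking the Automaton~\ref{aut:diffcomp} state formula (stated just after its definition) then delivers the recursions for $s_{i+1}, t_{i+1}$ and, for intermediate $j$, the formula for $(s,t)_j$ in claim~3. Claim~4 for the new range $f_i + w - 1 \leq L \leq f_{i+1} - 1$ is read off from the same case analysis; the subcase $s = 0, s_i = 1$ covers a trailing run of zeros at the end of the input that terminates without triggering re-entry into block $i+1$.

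For the identity $\val(\eps_{f_i + w - 2} \ldots \eps_{f_i + 1} 1) = a_i + 2^{w-1} s_i \bigl(1 - [s_i = 1 \wedge t_i = 0 \wedge \eps_{f_i + w - 1} = 0]\bigr)$, one rearranges the defining equation $s_i = \lfloor (\val(\ldots) + \tilde l)/2^{w-1} \rfloor$ to express $\val(\ldots)$ as its residue modulo $2^{w-1}$ plus the overflow $2^{w-1} s_i$; the definition of $a_i$ via modular reduction by $2^{k_i + w - 1}$ then identifies the residue with $a_i$ in the ``no-adjustment'' case. The Iverson bracket enters because the conjunction $s_i = 1 \wedge t_i = 0 \wedge \eps_{f_i + w - 1} = 0$ is exactly the condition $m \equiv 1 \pmod 2$ and $(n - l) \bmod 2^{w - 1} \leq u - l - 2^{w-1}$ under which line~\ref{alg:1mjif} of Algorithm~\ref{alg:d1} fires, adding a $2^{w-1}$ into $a_i$ which therefore must be removed from the overflow term. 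The edge case $\tilde u = -1$ is handled by observing that the inequality test is then vacuous, so $t_i \equiv 1$ throughout and the Iverson bracket is always zero. The main obstacle is exactly this three-way reconciliation --- between the transducer's carry $s_i$, the truth flag $t_i$ from Automaton~\ref{aut:diffcomp}, and the non-unique adjustment of Algorithm~\ref{alg:d1} --- which is what the Iverson bracket encodes; once it is done, the remainder of the proof is direct inspection of the transitions.
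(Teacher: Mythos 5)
Your overall strategy is the one the paper intends: the paper itself offers no details beyond the remark that the lemma ``can easily be proved by induction'', and your induction on the block index $i$ --- base case from the trailing zeros, inductive step reduced to the transition rules of Theorem~\ref{th:existtrans1} together with the state formula of Automaton~\ref{aut:diffcomp} --- is the right skeleton for claims 1, 3 and 4. (Two small imprecisions there: when $t_i=1$ and $k_i=0$ no looping state is entered at all, and when $s_i=1$, $t_i=0$ the looping state reached is not ``fixed'' but depends on $\eps_{f_i+w-1}$; both are absorbed by the case analysis you invoke, so they are cosmetic.)

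The genuine gap is in the step you yourself single out as the main obstacle, the identity for $\val(\eps_{f_i+w-2}\ldots\eps_{f_i+1}1)$. You assert that the conjunction $s_i=1\wedge t_i=0\wedge\eps_{f_i+w-1}=0$ ``is exactly the condition'' under which line~\ref{alg:1mjif} of Algorithm~\ref{alg:d1} fires. It is not: since $m\equiv s_i+\eps_{f_i+w-1}\pmod 2$, the branch fires precisely when $t_i=0$ and $s_i+\eps_{f_i+w-1}$ is odd, which also happens for $s_i=0$, $t_i=0$, $\eps_{f_i+w-1}=1$. In that case the adjustment $a\mapsto a+2^{w-1}$ is performed, the looping digits form the pattern $(0^{k_i-1}1)$, and the recursion for $a_i$ yields $a_i=\val(\eps_{f_i+w-2}\ldots\eps_{f_i+1}1)+2^{w-1}$ (no reduction occurs in the $\bmod\ 2^{k_i+w-1}$ because $s_i=0$ forces $\val(\eps_{f_i+w-2}\ldots\eps_{f_i+1}1)+\tilde l<2^{w-1}$), whereas the right-hand side of the displayed identity collapses to $a_i$ because of the prefactor $s_i=0$. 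So your derivation does not close in this sixth case of the $(s_i,t_i,\eps_{f_i+w-1})$ analysis; an honest execution of the rearrangement you describe forces the correction term to be $2^{w-1}\bigl(s_i-[t_i=0\wedge s_i+\eps_{f_i+w-1}\equiv1\bmod 2]\bigr)$ rather than a bracket that is active only when $s_i=1$, and silently identifying the two conditions is not a proof. You need to either carry out all six cases explicitly or flag the discrepancy with the statement as printed.
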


    \begin{example}\label{ex:asymmdig}
    	For $l=-3$ and $u=11$, we have $w=4$, $\tilde l=(011)_2$ and $\tilde u=u-l-2^{w-1}=(110)_2$. The transducer can be seen in Figure~\ref{aut:exasymmdig}, where all non-accessible states are gray.
    \end{example}
    \begin{figure}
      \centering
      \includegraphics{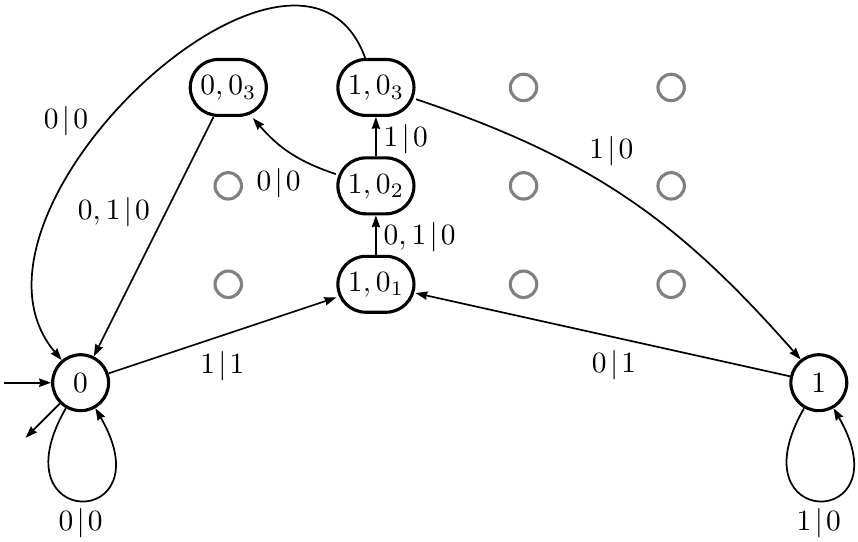}
      \caption{Transducer to compute the weight of the AJSF with digits in
        $D_{-3,11}$. The non-accessible states are gray.}
      \label{aut:exasymmdig}
    \end{figure}

We recall that a reset sequence of a transducer is a sequence $(n_{L}\ldots n_{0})$ such
that there exists a state $s$ with the following property: 
For all states $t$, if the transducer is in
state $t$ and the next input is $(n_{L}\ldots n_{0})$, then the transducer
 is in state $s$.

Now we generalize this transducer to arbitrary dimension $d$.

\begin{theorem}\label{thm:transdimd}
  There exists a transducer to compute the Hamming weight of the
  AJSF for the joint binary expansion
  of a $d$-dimensional vector of integers as input. It has one state which is
  initial and final, input and output alphabet $\{0,1\}$ and less than $8^dw$
  states.

The word $0^{4w}$ is a reset sequence of this transducer. It leads to the initial and final state
of the transducer.
\end{theorem}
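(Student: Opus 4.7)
The plan is to generalize the construction of Theorem~\ref{th:existtrans1} by simulating Algorithm~\ref{alg:d} coordinate by coordinate. The transducer reads the joint binary expansion column by column, so each transition processes a vector $\eps=(\eps_1,\ldots,\eps_d)\in\{0,1\}^d$. The state is indexed by a row position $i\in\{0,1,\ldots,w-1\}$ (with $i=0$ denoting a looping state) together with a per-coordinate label. For each coordinate $j$, the label in a block state is a triple $(s_j,t_j,c_j)\in\{0,1\}^3$: the carry $s_j$ and comparison flag $t_j$ of Automaton~\ref{aut:diffcomp} applied to the inputs $(\eps_j,l_i,u_i)^T$, plus an auxiliary carry $c_j$ corresponding to the adjustment $a_j\mapsto a_j+2^{w-1}$ permitted when $a_j\in\nonunique(D_{l,u})$. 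This yields at most $8^d$ labels per row and $w$ row indices, for fewer than $8^d w$ states once restricted to those accessible from the initial looping state with zero carry.

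Inside the block ($1\le i\le w-2$), each coordinate's $(s_j,t_j)$ evolves independently via Automaton~\ref{aut:diffcomp} exactly as in Theorem~\ref{th:existtrans1}, with output $0$. At the last row $i=w-1$, the transducer implements the inner \textbf{if}/\textbf{else} of Algorithm~\ref{alg:d}: the values $(s_j,t_j)$ classify each coordinate as belonging to $I_{\unique}$ or $I_{\nonunique}$ through the fixed digit set $D_{l,u}$, and the newly read column $\eps$, combined with the stored carry data, reveals the parity of each $m_j$. The transducer then decides in one step which non-unique digits to adjust and emits the new carry vector for the subsequent columns. From a looping state with carry vector $c$, reading column $\eps$ outputs $0$ and remains in a looping state if $c+\eps$ is componentwise even, and otherwise outputs $1$ and enters the first row of the block. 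Correctness follows by tracing Algorithm~\ref{alg:d} against the constructed transitions, generalizing Lemma~\ref{lem:existtrans1}.

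For the reset sequence $0^{4w}$, reading an all-zero column from a block state at row $i<w-1$ advances to row $i+1$ without changing the classification. From row $w-1$, a zero column either returns to a looping state with carry $\lfloor s_j/2\rfloor$ in each coordinate, or, when some labels $(s_j,t_j)$ force another nonzero digit, triggers a fresh block entry. Because the input remains all zero, any carries generated at the end of a block are themselves zero or are dissipated after the next traversal, so at most three full pass-throughs (each costing at most $w$ columns) suffice to land in the initial looping state with zero carry vector. Hence $0^{4w}$ leaves ample room regardless of the starting state.

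The principal technical obstacle is verifying that the per-coordinate label $(s_j,t_j,c_j)$ indeed captures exactly the information required for the global decision at the end of the block: the classification into $I_{\unique}$ and $I_{\nonunique}$ must be reconstructible from $(s_j,t_j)$ and the fixed digit set, and the parities of all $m_j$ must be recoverable from one additional column read together with the stored carries. Once this bookkeeping is pinned down in a $d$-dimensional analogue of Lemma~\ref{lem:existtrans1}, both the state bound and the reset-sequence claim follow from a careful but routine case analysis.
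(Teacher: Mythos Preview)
Your overall architecture matches the paper's: looping states carrying a vector $s\in\{0,1\}^d$, block states with $w-1$ rows, and per-coordinate labels that deliver the bound $8^dw$. However, your treatment of the inner \textbf{else} branch (line~\ref{alg:mjelse} of Algorithm~\ref{alg:d}) contains a real gap.

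You write that at row $w-1$ the transducer ``decides in one step which non-unique digits to adjust.'' That is correct for the \textbf{if} branch, where the decision depends only on the parities $m_j\equiv s_j+\eps_j\pmod 2$. It is \emph{not} correct for the \textbf{else} branch: there the adjustment of a non-unique coordinate $j$ hinges on whether $m_j\equiv u+1\pmod{2^{w-1}}$, and $m_j\bmod 2^{w-1}$ is determined by the \emph{next} $w-1$ input columns together with the carry $s_j$. No single transition at row $w-1$ can see that. The paper's solution is to note that in the \textbf{else} case a nonzero output is written anyway, so the transducer re-enters the block; during this second traversal it compares the running input plus $\tilde l$ digit by digit against $\tilde v=u-l+1\bmod 2^{w-1}$, recording in a subset $C\subseteq\{1,\ldots,d\}$ (equivalently, in your bits $c_j$) which coordinates still match. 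Only at the end of that second block is the \textbf{else} adjustment actually resolved, by replacing $(s_j,t_j)$ with $([u<2^{w-1}],0)$ for $j\in C$ before applying the usual last-row rules.

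Your label $c_j$ could carry precisely this information, but your proposal describes it as an ``auxiliary carry corresponding to the adjustment $a_j\mapsto a_j+2^{w-1}$'' and never says how it is initialized when leaving row $w-1$ in the \textbf{else} case, nor how it is updated inside the block (namely, cleared as soon as $s_j+\eps_j+l_i\bmod 2\neq v_i$). Without this, the construction does not implement Algorithm~\ref{alg:d}. Once you add the digit-by-digit comparison with $\tilde v$ and the substitution rule at the end of a block with $C\neq\emptyset$, your approach and the paper's coincide (your $(c_1,\ldots,c_d)$ is the indicator vector of the paper's $C$), and both the state bound and the reset-sequence claim go through.
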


\begin{proof}

 We construct a transducer calculating the weight of AJSF. In order to explain the structure of this transducer, we first consider a provisional transducer implementing a simpler version of the Algorithm~\ref{alg:d} which omits the else branch in line~\ref{alg:mjelse}, see also the algorithm on page 306 of \cite{asymmdigits}. The resulting provisional transducer is similar to the transducer in Figure~\ref{aut:asydigd1F}.
    
    For every vector
    $s\in\{0,1\}^d$, there is a state. These states are called looping states. The
    vector $s$ signifies the carry at each coordinate. The state
    $(0,\ldots,0)^T$ is the initial state. Furthermore, there is a
    block of states. The states inside the block have the labels $(s,t)_i$
    where $s$, $t\in\{0,1\}^d$, and $i$ is the row in the block. The
    coordinates of $s$ and $t$ have the same meaning as in the proof of
    Theorem~\ref{th:existtrans1}, that is: $s$ is the carry of the addition $n+\tilde l$ and $t$ signifies whether the digit is in $\nonunique(D_{l,u})$ or not.
    
    If $s\in\{0,1\}^d$ is a looping state, then there is a loop with label
    $s\mid0$ at this state. Because if we read $\eps=s$, then we have
    $\eps+s\equiv0\mod 2$, the output is $0$ and the carry propagates to the
    next step. If we read $\eps\neq s$, then we start with the $w-1$ transitions of Automaton~\ref{aut:diffcomp} in Figure~\ref{aut:diffcomp} in each coordinate. These $w-1$ transitions are processed independently for every coordinate. Therefore, we need $4^d$ states in each row and $w-1$ rows to process exactly $w-1$ transitions of Automaton~\ref{aut:diffcomp}.
    
    After the last row of the block of states, we either go back to a looping state
    or again start with the block of states immediately. Let $(s,t)_{w-1}$ be the current
    state in the last row and $\eps$ the next input digit. As in the $1$-dimensional case we have $m\equiv \eps+s\mod 2$. If for every coordinate
    $j$, $t_j=1$ implies that $m_j$ is even, then we have to process the
    \textbf{if} branch in line~\ref{alg:mjif}. We write this condition as the scalar product
    $t\cdot (s+\eps\bmod 2)=0$. In this case, the next output digit will be
    $0$ and we go on to a looping state $s'$ where the new carry is $s'=\lfloor\frac{s+\eps}2\rfloor$.
    
    If  $t\cdot(s+\eps\bmod 2)=0$ does not hold, then we would have to process the \textbf{else} branch in line~\ref{alg:mjelse}. But since we skip this part for now, we simply have to restart the transducer with the input $m$ in the case $t\cdot(s+\eps\bmod 2)>0$. We know that $m$ is the original next input $\eps$ plus the carry $s$. In this case, $s\neq\eps$, otherwise $t\cdot(s+\eps\bmod 2)>0$ would be false. Therefore, there is a transition $s\xrightarrow{\eps\mid1}(s',t')_1$ in this transducer. This ensures that, when restarting the transducer with input $m$, we immediately go on to the state $(s',t')_{1}$. Hence, we have a transition $(s,t)_{w-1}\xrightarrow{\eps\mid1}(s',t')_{1}$ in the provisional transducer.
    
    Altogether, for $s$, $s'$, $t$, $t'\in\{0,1\}^d$, $i\in\{1,\ldots w-2\}$, $j\in\{1\ldots d\}$ and $\eps\in\{0,1\}^d$, we have the following transitions in this provisional transducer:
    
    \begin{itemize}
    	\item $s\xrightarrow{\eps\mid0}s$ if $\eps=s$
    	\item $s\xrightarrow{\eps\mid1}(s',t')_1$ if $\eps\neq s$ and $\forall j:\, (s_j,[\tilde u=-1])\xrightarrow{(\eps_j,l_0,u_0)^T}(s_j',t_j')$ is a transition in Automaton~\ref{aut:diffcomp}
    	\item $(s,t)_i\xrightarrow{\eps\mid0}(s',t')_{i+1}$ if $\forall j:\,(s_j,t_j)\xrightarrow{(\eps_j,l_i,u_i)^T}(s_j',t_j')$ is a transition in Automaton~\ref{aut:diffcomp}
    	\item $(s,t)_{w-1}\xrightarrow{\eps\mid0}s'$ if $t\cdot(s+\eps\bmod 2)=0$ and $s'=\left\lfloor\frac{s+\eps}2\right\rfloor$
    	\item $(s,t)_{w-1}\xrightarrow{\eps\mid1}(s',t')_1$ if $t\cdot(s+\eps\bmod 2)>0$ and $s\xrightarrow{\eps\mid1}(s',t')_1$ is a transition in this transducer.
    \end{itemize}
    
    This transducer does the same as Algorithm~\ref{alg:d} without the
    \textbf{else} branch in line~\ref{alg:mjelse}. In the case $\tilde u=-1$
    we are finished because in the \textbf{else} branch nothing is
    done. Otherwise we must consider the \textbf{else} statement. 
    
    Let $(s,t)_{w-1}$ be the current state in the last row and $\eps$ be the next input digit. To process the \textbf{else} branch, $t\cdot(s+\eps\bmod 2)>0$ must hold in the state $(s,t)_{w-1}$. Otherwise, we would process the \textbf{if} branch. First let us examine one coordinate $j$. If $t_j=1$, nothing is done in the \textbf{else} branch because the digit at this coordinate is unique. If $t_j=0$, we have to decide whether $m_j\equiv u+1\mod2^{w-1}$. Here, $m_j\bmod 2^{w-1}$ corresponds to the next $w-1$ input digits plus the carry $s_j$ from the current state $(s,t)_{w-1}$. So we just have to compare the input letters plus the carry with the binary expansion of $u+1\bmod2^{w-1}$ or, equivalently, we compare $m_j-l\bmod 2^{w-1}$ with $\tilde v=u-l+1\bmod 2^{w-1}$. If they are not the same at some point, then we just go on like we did in the provisional transducer. 
    
    If they are the same, we have to process the \textbf{else} branch. There we would have taken $m_j-1$ as the next input of the algorithm instead of $m_j$. Therefore we have to decide where we would be in the provisional transducer, when starting in $(s,t)_{w-1}$ and the input is the original input minus $1$. This case only happens if originally the next nonzero digit is unique, but changing the current digit ensures that the next nonzero digit is non-unique. Nevertheless, the next digit will not be $0$, since this is the case when the \textbf{if} branch is processed. Therefore we would start in $(s,t)_{w-1}$ with original input minus $1$ and immediately go to the block of states again. Otherwise, the next digit would be $0$. Thus after $w-1$ transitions, we are again in a state $(s',t')_{w-1}$ in the last row. Since the next digit is non-unique, we have $t_j'=0$.
    
    To determine the value of $s'_j$, we have to decide whether there is a carry at position $w-1$ in the addition of $m_j-1$ and $\tilde l$. We have $m_j-1\bmod 2^{w-1}=u+k2^{w-1}$ for $k\in\Z$. Since $0\leq u\leq 2^{w}-1$, we have $k\in\{0,-1\}$. Then the carry is
    \begin{equation*}
    		s'_j=\left\lfloor\frac{(m_j-1)\bmod 2^{w-1}+\tilde l\bmod 2^{w-1}}{2^{w-1}}\right\rfloor
    		=\left\lfloor\frac{u+\tilde l+k2^{w-1}}{2^{w-1}}\right\rfloor
    		=1+k,
    \end{equation*}
    because $2^{w-1}\leq u+\tilde l<2^w$. Therefore, we have
    $$s_j'=\begin{cases}0&\text{ if }u\geq 2^{w-1},\\1&\text{ if }u<2^{w-1}.\end{cases}$$
    As a result, the state $(s',t')_{w-1}$ where we would be in the provisional transducer has 
    $$(\left[u<2^{w-1}\right],0)$$
     in the $j$-th coordinate.
    
    To remember that we can change the $j$-th coordinate at the end of the
    block, we have to use a second identical block $\{j\}$. Let $\emptyset$ be
    the first block, which already exists in the provisional transducer. Let
    $(s,t)_i^C$ be a state in block $C$. At the end of block $\emptyset$, we
    go to block  $\{j\}$ if $t\cdot(s+\eps\bmod 2)>0$ and $t_j=0$.  Otherwise,
    we go to a looping state or to the block $\emptyset$. If we find out that $m_j\not\equiv u+1\mod 2^{w-1}$ in block $\{j\}$, then we go back to the appropriate state in block $\emptyset$. At the end of block $\{j\}$ in the state $(s,t)_{w-1}^{\{j\}}$, we go to the same states as we would go from the state with $(\left[u<2^{w-1}\right],0)^\emptyset_{w-1}$ in the $j$-th coordinate.
    
    Up to now, we only considered one coordinate. Now we combine this approach
    for all coordinates. Since for each coordinate, we have to remember whether we are allowed to change it or not, we need one block for every subset of coordinates. Let block $C\subseteq\{1,\ldots,d\}$ be the block where we can change the coordinates in $C$. The states in block $C$ are denoted by $(s,t)_i^C$. The block $\emptyset$ is the block which already exists in the provisional transducer. The block $\{1,\ldots,d\}$ is not accessible since we need at least one unique coordinate and only non-unique coordinates can be changed.
    
    If we are in block $C\neq\emptyset$ and we find out that not every coordinate $j\in C$ satisfies $m_j\equiv u+1\mod 2^{w-1}$, we go to the appropriate state in block $C'=C\setminus\{j\in\{1,\ldots,d\}\mid m_j\not\equiv u+1\mod 2^{w-1}\}$. At the end of block $C$ in state $(s,t)_{w-1}^C$, we can change the coordinates in $C$ and all other coordinates remain the same. Therefore, we go to the same states as we would go from $(\tilde s,\tilde t)_{w-1}^\emptyset$ where $\tilde s_j=\left[u<2^{w-1}\right]$, $\tilde t_j=0$ for $j\in C$ and all other coordinates stay the same, that is $\tilde s_j=s_j$ and $\tilde t_j=t_j$ for $j\not\in C$.
    
    Let $(v_{w-2}\ldots v_0)$ be the binary expansion of $\tilde v$. Further
    let $s$, $s'$, $t$, $t'\in\{0,1\}^d$, $C$, $C'\subsetneq\{1,\ldots,d\}$, $j\in\{1\ldots d\}$, $i\in\{1,\ldots,w-2\}$ and $\eps\in\{0,1\}^d$. Then altogether there are the following transitions in the final transducer:
    
    \begin{itemize}
    	\item $s\xrightarrow{\eps\mid0}s$ if $s=\eps$
    	\item $s\xrightarrow{\eps\mid1}(s',t')_1^\emptyset$ if $s\xrightarrow{\eps\mid1}(s',t')_1$ is a transition in the provisional transducer
    	\item $(s,t)_i^C\xrightarrow{\eps\mid0}(s',t')_{i+1}^{C'}$ if $(s,t)_i\xrightarrow{\eps\mid0}(s',t')_{i+1}$ is a transition in the provisional transducer and $C'=C\setminus\{j:\,s_j+\eps_j+l_i\bmod 2\neq v_i\}$
    	\item $(s,t)_{w-1}^\emptyset\xrightarrow{\eps\mid0}s'$ if $t\cdot(s+\eps\bmod2)=0$ and $s'=\left\lfloor\frac{s+\eps}2\right\rfloor$
    	\item $(s,t)_{w-1}^\emptyset\xrightarrow{\eps\mid1}(s',t')_1^{C'}$ if $t\cdot(s+\eps\bmod 2)>0$, $(s,t)_{w-1}\xrightarrow{\eps\mid1}(s',t')_1$ is a transition in the provisional transducer and $C'=\{j:\,s_j+\eps_j+l_0\bmod 2=v_0\text{ and }t_j=0\}$
    	\item $(s,t)_{w-1}^C\xrightarrow{\eps\mid1}(s',t')_1^{C'}$ if
          $C\neq\emptyset$ and
          $(\tilde{s},\tilde{t})_{w-1}^\emptyset\xrightarrow{\eps\mid1}(s',t')_1^{C'}$
          is a transition in this transducer with
          $\tilde{s}_j=\left[u<2^{w-1}\right]$, $\tilde{t}_j=0$ for $j\in C$
          and $\tilde s_j=s_j$, $\tilde t_j=t_j$ for $j\not\in C$
          \item  $(s,t)_{w-1}^C\xrightarrow{\eps\mid0}s'$ if
          $C\neq\emptyset$ and
          $(\tilde{s},\tilde{t})_{w-1}^\emptyset\xrightarrow{\eps\mid0}s'$
          is a transition in this transducer with
          $\tilde{s}_j=\left[u<2^{w-1}\right]$, $\tilde{t}_j=0$ for $j\in C$
          and $\tilde s_j=s_j$, $\tilde t_j=t_j$ for $j\not\in C$.
    \end{itemize}

Finally, we restrict the transducer to the states which are actually
accessible from the initial state.

Due to the construction of the transducer, the sequence $0^{4w}$ leads to the
initial and final state from any state. 

It is possible to define similar sequences to those in Lemma
\ref{lem:existtrans1}, but since this requires more than one page, we omit this here.
\end{proof}

   \begin{example}\label{ex:allgdim}
    In Figure~\ref{aut:exdg}, there is a sketch of the transducer computing the
    weight of the AJSF over $D_{-2,3}$ in dimension $2$. The labels of transitions are omitted in the figure and the transitions going back at the end of a block or inside a block are gray. We have $w=3$, $\tilde u=(01)$, $\tilde l=(10)$ and $\tilde v=(10)$. 
    
    For example, the state $\begin{pmatrix}01\\11\end{pmatrix}_2^{\{2\}}$ has transitions to the same states as the state $\begin{pmatrix}01\\10\end{pmatrix}_2^\emptyset$ since $u<2^{w-1}$.
    
    \begin{figure}
      \centering
      \includegraphics{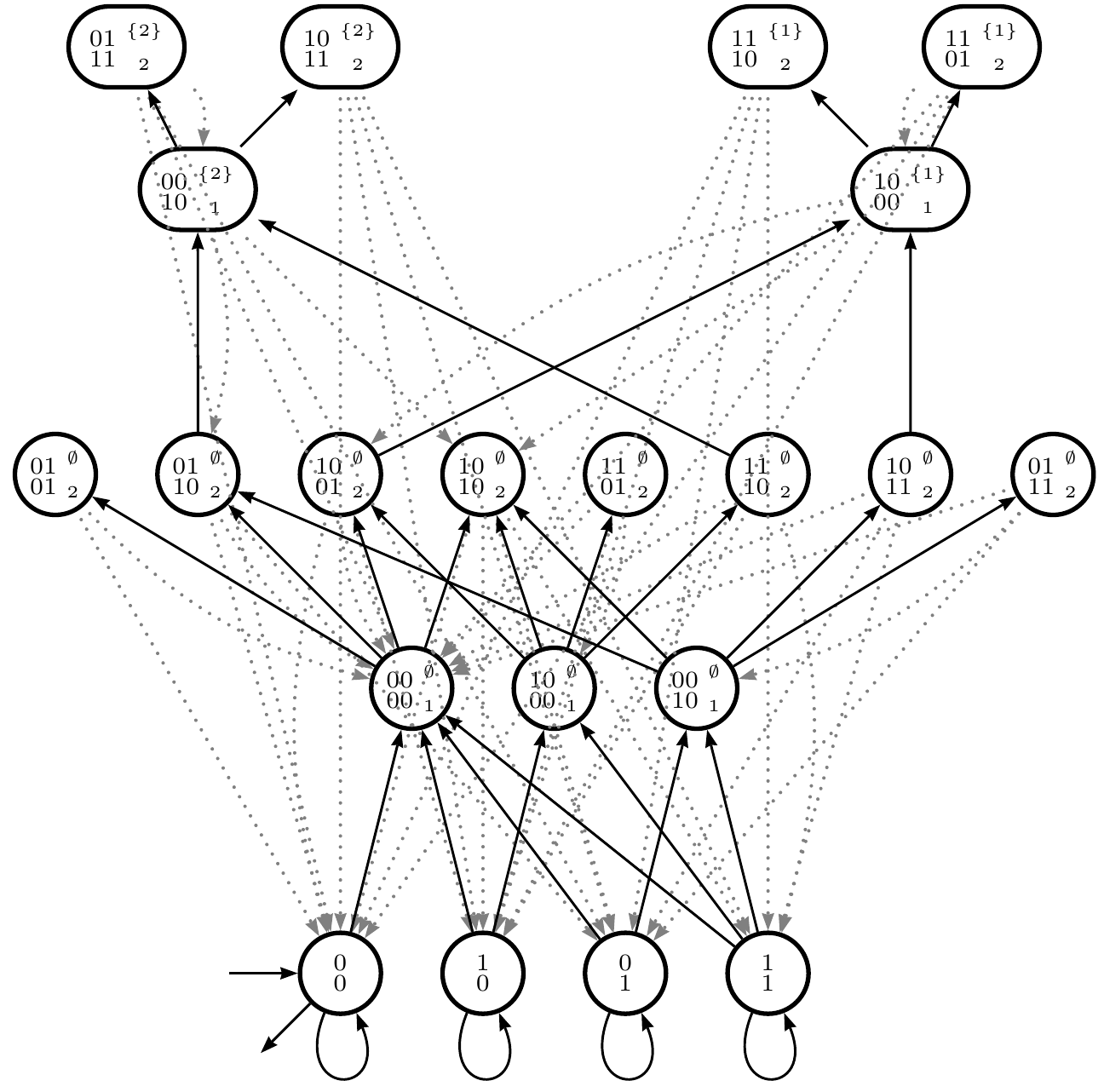}
      \caption{Transducer to compute the Hamming weight of the AJSF in dimension~$2$ over the digit set $D_{-2,3}$.}
      \label{aut:exdg}
    \end{figure}
    \end{example}

\section{Proof of Theorem \ref{thm:asywgen}}\label{sec:asymp-distr-hamm}

This section contains the proof of Theorem~\ref{thm:asywgen} which is a generalization of Theorem 6 in \cite{GHP}. With the transducer in Theorem \ref{thm:transdimd}, we can compute the
asymptotic Hamming weight. Therefore, we use the following lemma which can be proved by
induction on $L$.
	\begin{lemma}\label{lem:rec}
Let $A_0$, $A_1$ be matrices in $\mathbb{C}^{n\times n}$, $H:\mathbb{N}\to
\mathbb{C}^{n\times n}$ be any function and $G:\mathbb{N}\to\mathbb{C}^{n\times n}$ be a
function which satisfies the recurrence relation
\[ G(2N+\varepsilon)=A_\varepsilon G(N) + \varepsilon H(N) \]
for $N\ge 1$ and $\varepsilon\in\{0,1\}$. Then
		$$G\left(\sum_{p=0}^L\eps_p2^p\right)=\sum_{p=0}^L\eps_p\left(\prod_{j=0}^{p-1}A_{\eps_j}\right)H\left(\sum_{j=p+1}^L\eps_j2^{j-p-1}\right)$$
where we additionally set $H(0)=G(1)$.
	\end{lemma}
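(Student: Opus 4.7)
The plan is to induct on $L$, using the recurrence $G(2N+\varepsilon) = A_\varepsilon G(N) + \varepsilon H(N)$ to peel off the least significant binary digit at each step. The convention $H(0) = G(1)$ is crucial to cover the case where the inner argument of $H$ collapses to zero.

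For the base case $L=0$, the right-hand side has only the summand at $p=0$; its empty matrix product is the identity and the argument of $H$ is $0$, so by convention it equals $\varepsilon_0 \, I \, G(1) = \varepsilon_0 G(1)$. This matches $G(\varepsilon_0)$ when $\varepsilon_0 = 1$, and the $\varepsilon_0=0$ case is consistent provided one interprets the empty representation of $N=0$ in the natural way (both sides vanish).

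For the inductive step from $L$ to $L+1$, decompose
\[
N = \sum_{p=0}^{L+1}\varepsilon_p 2^p = 2M + \varepsilon_0, \qquad M = \sum_{q=0}^L \varepsilon_{q+1} 2^q,
\]
and apply the recurrence to obtain $G(N) = A_{\varepsilon_0} G(M) + \varepsilon_0 H(M)$. The second term is exactly the $p=0$ summand of the target formula, since the empty product is the identity and $M = \sum_{j=1}^{L+1}\varepsilon_j 2^{j-1}$. For the first term, apply the induction hypothesis to $G(M)$ with digit sequence $\eta_q := \varepsilon_{q+1}$ for $q = 0, \dots, L$, then shift the outer summation index by $p \mapsto p-1$ and the inner product index by $k \mapsto k-1$. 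The factor $A_{\varepsilon_0}$ merges into the inner product as its new $k=0$ factor, yielding
\[
A_{\varepsilon_0} G(M) = \sum_{p=1}^{L+1} \varepsilon_p \left(\prod_{k=0}^{p-1} A_{\varepsilon_k}\right) H\!\left(\sum_{j=p+1}^{L+1} \varepsilon_j 2^{j-p-1}\right),
\]
which are precisely the summands for $p \ge 1$ in the target formula. Combining this with the $p=0$ contribution completes the induction.

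There is no real obstacle here; the only point requiring care is the double reindexing (outer sum and inner product) together with the observation that $A_{\varepsilon_0}$ slots into the $k=0$ position so that the telescoping $A_{\varepsilon_0} \cdot \prod_{k=1}^{p-1} A_{\varepsilon_k} = \prod_{k=0}^{p-1} A_{\varepsilon_k}$ works correctly. This is why the authors consider the proof routine and defer it to the reader.
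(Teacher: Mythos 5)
Your induction on $L$, peeling off the least significant digit via the recurrence and reindexing, is exactly the argument the paper has in mind (the paper only remarks that the lemma ``can be proved by induction on $L$'' and omits the details), and your computation is correct. The one point worth adding is that the recurrence is stated only for $N\ge 1$, so in the inductive step it cannot be invoked when $M=\sum_{q=0}^{L}\eps_{q+1}2^q=0$; but then $N=\eps_0$ and the formula is checked directly via $H(0)=G(1)$, just as in your base case.
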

   We define
   $f(m_1,\ldots,m_d)\assign e^{it\h(m_1,\ldots,m_d)}$. The matrices
   $M_{\eps_1,\ldots,\eps_d}$ for $\eps_i\in\{0,1\}$ are defined as follows:
   The $(j,k)$-th entry of the matrix $M_{\eps_1,\ldots,\eps_d}$ is $e^{ith}$
   if there is a transition from state $j$ to $k$ with input label
   $(\eps_1,\ldots,\eps_d)^T$ and output label $h$. The entry is $0$ if there
   is no transition from state $j$ to $k$ with this input label. The ordering
   of states is
   considered to be fixed in such a way that the initial state is the last state. Then we have
    \begin{align}\label{eq:1}f(m_1,\ldots,m_d)=  v^T\prod_{p=0}^LM_{m_{1,p},\ldots,m_{d,p}}M_{0,\ldots,0}^{4w} v\end{align}
    for $ v^T=(0,\ldots,0,1)$ and $m_i=\sum_{p=0}^Lm_{i,p}2^p$. The product describes all possible paths from any state to any other state, using
edges with input labels corresponding to the input $(m_{1},\ldots,m_{d})$. The exponent of the entries of the
matrix product is the sum of output labels on these paths. Since we are interested in paths
starting and ending in state $(0,\ldots,0)^{T}$, we multiply by $v^{T}$ from the left and $v$ from the right. The factor
    $M_{0,\ldots,0}^{4w}$ is due to the reset sequence from
    Theorem~\ref{thm:transdimd} and ensures that we stop at the final state.
    
    We further define the following summatory functions
    \begin{align*}
    	E(N)&=\sum_{0\leq m_1,\ldots,m_d<N}f(m_1,\ldots,m_d),\\
    	F(N)&=\sum_{0\leq m_1,\ldots,m_d<N}M(m_1,\ldots,m_d),
    \end{align*}
    with 
    $$M(m_1,\ldots,m_d)=\prod_{p=0}^LM_{m_{1,p},\ldots,m_{d,p}}.$$
    In other words, this last equation says that the function $M(m_{1},\ldots,m_{d})$ is $2$-multiplicative
    (cf. \cite{distbinomials}).
 By~(\ref{eq:1}), we have
$$E(N)=v^{T}F(N)M_{0,\ldots,0}^{4w}v.$$

    To write down a recursion formula for $F(N)$, we need the following matrices
    $$B_{C,D}\assign\sum_{\substack{\eps_i=0,1\\i\not\in C\cup D}}\sum_{\substack{\eps_i=0\\i\in C}}\sum_{\substack{\eps_i=1\\i\in D}}M_{\eps_1,\ldots,\eps_d}$$
    for disjoint $C$, $D\subseteq\{1,\ldots,d\}$. The first index $C$ of $B_{C,D}$ is
    the set of coordinates where the digit is $0$. The second index $D$ is
    the set of coordinates where the digit is $1$. All other coordinates in
    $(C\cup D)^c$ can be any digit. By
construction, we have $\|M_{\varepsilon_1, \ldots, \varepsilon_d}\|_1=1$, where
$\|\,\cdots\,\|_1$ denotes the row sum norm of a matrix. We conclude that
$\|B_{C,D}\|_1\le 2^{d-|C|-|D|}$.
 As a special matrix we define $A=B_{\emptyset,\emptyset}$.
    
    Furthermore we define the functions
    $$G_C(N)\assign\sum_{\substack{0\leq m_i<N\\i\not\in
        C}}\sum_{\substack{m_i=N\\i\in C}}M(m_1,\ldots,m_d)$$
    for every set $C\subseteq \{1,\ldots,d\}$.
    
    Then we have $F(N)=G_\emptyset(N)$, and the functions satisfy the following
    recursion formulas due to $2$-multiplicativity
    \begingroup\allowdisplaybreaks
    \begin{align*}
    	G_C(2N)&=\sum_{\substack{\eps_i=0,1\\i\not\in C}}\sum_{\substack{\eps_i=0\\i\in C}}\sum_{\substack{2m_i+\eps_i<2N\\i\not\in C}}\sum_{\substack{2m_i+\eps_i=2N\\i\in C}}M(2m_1+\eps_1,\ldots,2m_d+\eps_d)\\
    	&=B_{C,\emptyset}G_C(N),\\
    	G_C(2N+1)&=\sum_{\substack{\eps_i=0,1\\i\not\in C}}\sum_{\substack{\eps_i=1\\i\in C}}\sum_{\substack{2m_i+\eps_i<2N+1\\i\not\in C}}\sum_{\substack{2m_i+\eps_i=2N+1\\i\in C}}M(2m_1+\eps_1,\ldots,2m_d+\eps_d)\\
    	&=\sum_{D\subseteq C^c}\sum_{\substack{\eps_i=0,1\\i\not\in C\cup D}}\sum_{\substack{\eps_i=0\\i\in D}}\sum_{\substack{\eps_i=1\\i\in C}}M_{\eps_1,\ldots,\eps_d}\sum_{\substack{m_i<N\\i\not\in C\cup D}}\sum_{\substack{m_i=N\\i\in C\cup D}}M(m_1,\ldots,m_d)\\
    	&=\sum_{D\subseteq C^c}B_{D,C}G_{C\cup D}(N).
    \end{align*}\endgroup

	From this recursion, we can determine $G_C(N)$ inductively because all
        functions $G_{C'}$ required for computing $G_{C}$ have $C'\supsetneq
        C$. Therefore, we have the following recursion formula for $F(N)=G_{\emptyset}(N)$
	$$F(2N+\eps)=AF(N)+\eps H(N)$$
	for $N\geq 1$, $\eps\in\{0,1\}$ and 
	$$H(N)=\sum_{\emptyset\neq D\subseteq \{1,\ldots,d\}}B_{D,\emptyset}G_{D}(N).$$

		If we define $H(0)=G_{\emptyset}(1)$, we can use Lemma~\ref{lem:rec} and get
		\begin{align}\label{eq:fd}F\left(\sum_{p=0}^L\eps_p2^p\right)=\sum_{p=0}^L\eps_pA^pH\left(\sum_{j=p+1}^L\eps_j2^{j-p-1}\right).\end{align}
		Here, $H(N)$ is considered to be a known function because it is a sum of functions $G_C(N)$, which are recursively known by Lemma~\ref{lem:rec}.
		
		From the definition of $G_C(N)$, we can derive the growth rates of the functions $G_C(N)$ and $H(N)$. We have $\|G_C(N)\|_{1}=\mathcal O(N^{d-\vert C\vert})$ and $\|H(N)\|_{1}=\mathcal O(N^{d-1})$.
    
		Next, we investigate the eigenvalues of the matrix
                $A$. We first consider the case $t=0$. In this case,
                $A$ is the adjacency matrix of the
                underlying graph of the transducer
                 in Theorem  \ref{thm:transdimd}. Therefore, it has an
                eigenvalue $2^d$ with eigenvector $(1,\ldots,1)^T$. By the
                theorem of Perron-Frobenius, there is a unique dominant
                eigenvalue $\mu(0)$ of $A$ which is easily seen to be primitive as
                every state is reachable from any other state in exactly $4w$
                steps. As $\|A\|_1\leq 2^d$ and the largest eigenvalue is always at most $\|A\|_1$, $\mu(0)=2^d$ is the largest eigenvalue. We denote the modulus of the second largest eigenvalue
                by $\beta(0)$. Since
                eigenvalues are continuous, for $t$ in a suitable neighborhood
                of $0$, $A$ has a unique dominant eigenvalue $\mu(t)$ and the
modulus
                $\beta(t)$ of the second largest eigenvalue fulfills
                $\beta(t)<|\mu(t)|$.
    
    Now we want to split up (\ref{eq:fd}) into two parts, one for the
    dominating eigenvalue and one for the remaining eigenvalues. Therefore, let
    $J=T^{-1}AT$ be a Jordan decomposition of $A$ where $J$ has been sorted
    such that it has $\mu(t)$ in the upper left corner. We define $\Lambda\assign T\diag(\mu(t)^{-1},0,\ldots,0)T^{-1}$ and $R\assign T(J-\diag(\mu(t),0,\ldots,0))T^{-1}$. Then $A^p=\mu^L\Lambda^{L-p}+R^p$ holds for $p\leq L$. Further, we define
    \begin{align}
    	\Lambda(x_0,x_1,\ldots)&=\sum_{p=0}^{\infty}x_p\Lambda^pH\left(\sum_{j=0}^{p-1}x_j2^{p-1-j}\right),\label{eq:lambda}\\
    	R\left(\sum_{p=0}^{L}\eps_p2^{p}\right)&=\sum_{p=0}^{L}\eps_pR^pH\left(\sum_{j=p+1}^L\eps_j2^{j-p-1}\right).\nonumber
    \end{align}
    The function $\Lambda$ is well defined on the infinite product space
    $\{0,1\}^\N$ because it is dominated by a geometric series. We extend $\Lambda$ to a function on
    $[1,2)$ by setting
    $\Lambda\left(\sum_{p=0}^{\infty}x_{p}2^{-p}\right)\assign
    \Lambda(x_{0},x_{1},\ldots)$ with the standard  binary expansion and choosing
    the representation ending on $0^\omega$ in the case of ambiguity.
    
    Then we have
    $$F\left(\sum_{p=0}^L\eps_p2^p\right)=\mu^L\Lambda(\eps_L,\eps_{L-1},\ldots,\eps_0,0^\omega)+R(\eps_L,\ldots,\eps_0)$$
    and
    \begin{align*}E(N)=\mu(t)^{\log_2 N}\Psi(\log_2N,t)+ \tilde R(N,t)\end{align*}
    with $\Psi(x,t)=\mu(t)^{-\{x\}}
    v^T\Lambda(2^{\{x\}})M_{0,\ldots,0}^{4w} v$ and $\tilde
    R(N,t)=v^{T}R(N)M_{0,\ldots,0}^{4w}v$. Furthermore, there is a $\delta\in(0,1]$
    such that $\log_{2}\beta(t)<d-\delta$ in a suitable
    neighborhood of $0$. Then we have
    $$\vert\tilde R(N,t)\vert=\mathcal O(N^{d-\delta}).$$

    So we have
    \begin{align}\label{eq:expsum}E(N)=N^{d+a_1t+a_2t^2+\mathcal
        O(t^3)}\Psi(\log_2N,t)+\mathcal O(N^{d-\delta})\end{align}
with $a_{1}$ and $a_{2}$ depending on the Taylor expansion of $\log_{2}\mu(t)$
at $t=0$. If we insert $t=0$ in~(\ref{eq:expsum}), we obtain
$\psi_{0}=\Psi(\log_{2}N,0)=1+\bigOh\left(N^{-\delta}\right)$.

       The function $\Psi(x,t)$ is periodic in $x$ with period $1$ and is well
       defined for all $x\in\mathbb R^+$. To prove continuity in $x$, we first note that continuity for
       $x\in[0,1)$ with $x=\log_2y$ where $y$ is not a dyadic rational
       follows from (\ref{eq:lambda}). To prove it for $x=\log_2y$ with
       $y=\sum_{p=0}^L\eps_p2^{-p}$ a dyadic rational with $\eps_L=1$, we observe that the two
       one-sided limits exist due to (\ref{eq:lambda}). Next, we prove that they are the same. Therefore, we look at the two sequences $N_k=y2^{L+k}$ and $\tilde N_k=y2^{L+k}-1$. Then
    $$\lim_{k\rightarrow\infty}2^{\{\log_2N_k\}}=(\eps_0\centerdot\eps_1\ldots\eps_{L-1}10^\omega)\text{ and }\lim_{k\rightarrow\infty}2^{\{\log_2\tilde N_k\}}=(\eps_0\centerdot\eps_1\ldots\eps_{L-1}01^\omega).$$
    If we insert these two sequences in (\ref{eq:expsum}), we get
    $$\mathcal O(N_{k}^{d-1})=E(N_k)-E(\tilde N_k)=N_k^{d}\Psi(\log_2N_k,t)-\tilde N_k^{d}\Psi(\log_2\tilde N_k,t)+\mathcal O(N_k^{d-\delta}),$$
    and hence $\lim_{k\rightarrow\infty}\Psi(\{\log_2
    N_k\},t)=\lim_{k\rightarrow\infty}\Psi(\{\log_2\tilde N_k\},t)$. Therefore,
    $\Psi(x,t)$ is continuous in $x$.
       
     In $t$, $\Psi(x,t)$ is also continuous because the eigenvalues of a
     matrix are continuous. Furthermore, the function $\Psi(x,t)$ is
     arbitrarily often differentiable in $t$ because it is dominated by a
     geometric series. By the same argument as above, these derivatives are
     continuous in $x$.
    
    The first and second derivative of $E(N)$ with respect to $t$ at $t=0$ imply that the expected value of the Hamming weight is
    \begin{align}\label{eq:2}\frac{1}{N^d}\sum_{m_i<N}\h(m_1,\ldots,m_d)=e_{\ell,u,d}\log_2N+\Psi_1(\log_2N)+\mathcal O(N^{-\delta}\log N)\end{align}
    with $e_{\ell,u,d}=-ia_1\log2$ and $\Psi_1(\log_2N)=-i\frac{\partial}{\partial t}\Psi(\log_2 N,t)\vert_{t=0}$, and
    \begin{align*}\frac1{N^d}\sum_{m_i<N}\h^2(m_1,\ldots,m_d)&=v_{\ell,u,d}\log_2N+e_{\ell,u,d}^2\log^2_2N+2e_{\ell,u,d}\log_2N\Psi_1(\log_2N)\\&\qquad+\Psi_2(\log_2N)+\mathcal O(N^{-\delta}\log^2N)\nonumber\end{align*}
    with $v_{\ell,u,d}=-2a_2\log 2$ and $\Psi_2(\log_2N)=-\frac{\partial^2}{\partial t^2}\Psi(\log_2 N,t)\vert_{t=0}$. From that, we calculate the variance which is
    \begin{align*}
    \frac1{N^d}\sum_{m_i<N}\h^2(m_1,\ldots,m_d)&-\left(\frac1{N^d}\sum_{m_i<N}\h(m_1,\ldots,m_d)\right)^2=\\& v_{\ell,u,d}\log_2 N-\Psi_1^2(\log_2N)+\Psi_2(\log_2N)
    +\mathcal O(N^{-\delta}\log^2N).
    \end{align*}
    We first compute the characteristic function $\hat g_N(t)$ of the random variable 
    	$$Z=\frac{\h(m_1,\ldots,m_d)-e_{\ell,u,d}\log_2 N}{\sqrt{v_{\ell,u,d}\log_2 N}},$$
    	which is
    	\begin{align*}
    		\hat g_N(t)&=\frac 1{N^d}\sum_{m_{i}<N}e^{it\frac{\h(m_1,\ldots,m_d)-e_{\ell,u,d}\log_2N}{\sqrt{v_{\ell,u,d}\log_2N}}}\\
    		&=e^{\frac{-t^2}2}\left(1+\mathcal O\left(\frac{t^3}{\log^{3/2} N}\right)\right)\psi\left(\log_2N,\frac t{\sqrt{v_{\ell,u,d}\log_2N}}\right)\\&\quad+\frac1{N^{d}}\tilde R\left(N,\frac t{\sqrt{v_{\ell,u,d}\log_2N}}\right)e^{-it\frac{e_{\ell,u,d}}{\sqrt{v_{\ell,u,d}}}\sqrt{\log_2N}}.
    	\end{align*}
    	
    Since $\hat g_{N}(t)$ is a characteristic function,  we have
    $1=\psi_{0}+r_{0}$ for $r_{0}=\frac1{N^{d}}\tilde R(N,0)$ and $\psi_{0}=\Psi(\log_{2}
    N,0)$. We know that $\frac1{N^{d}}\tilde R(N,t)=\mathcal O(N^{-\delta})$. Next, we can estimate the difference from $\hat g_N(t)$ to the characteristic function $\hat f(t)=e^{-\frac{t^2}2}$ of the normal distribution with mean $0$ and variance $1$, which is
    	\begin{align*}\vert \hat g_N(t)-\hat f(t)\vert&=\left\vert e^{-\frac{t^2}2}\left(1+\mathcal O\left(\frac{t^3}{\log^{3/2}N}\right)\right)\left(\psi_0+\mathcal O\left(\frac t{\sqrt{\log N}}\right)\right)\right.\nonumber\\
      &\quad+\left.\frac1{N^{d}}\tilde R\left(N,\frac t{\sqrt{v_{\ell,u,d}\log_2N}}\right)\exp\left(-it\frac{e_{\ell,u,d}}{\sqrt{v_{\ell,u,d}}}\sqrt{\log_2N}\right)-(\psi_0+r_0)e^{-\frac{t^2}2}\right\vert\nonumber\\
  &=\mathcal O\left(\frac t{\sqrt{\log N}}\right),          
\end{align*}
for $t=o(\sqrt{\log N})$.
    	
    	Therefore, the Berry-Esseen inequality (cf.~\cite{vaaler}) implies
    	\begin{align*}
    	\mathbb{ P}\left(\frac{\h(m_1,\ldots,m_d)-e_{\ell,u,d}\log_2N}{\sqrt{v_{\ell,u,d}\log_2N}}<x\right)=\frac1{\sqrt{2\pi}}\int_{-\infty}^xe^{-\frac{y^2}2}dy+\mathcal O\left(\frac 1{\sqrt[4]{\log N}}\right).
    \end{align*}

 For a specific digit set and dimension, we can compute the constants $e_{\ell,u,d}$ and $v_{\ell,u,d}$ explicitly.

\begin{example}\label{ex:adjmat}We consider the digit set $D_{-2,3}$ in dimension $2$. See
  Example~\ref{ex:allgdim} and Figure~\ref{aut:exdg} for the transducer. The
  adjacency matrix $A$ of the underlying graph of this transducer is given in
  Table~\ref{tab:allgdim}, where $z=e^{it}$.
    \begin{table}$$A=\begin{pmatrix}
    0& 0& 0& 0& 0& 1& 1& 1& 1& 0& 0& 0& 0& 0& 0& 0& 0& 0& 0& 0& 0\\
    0& 0& 0& 0& 0& 0& 0& 1& 1& 1& 1& 0& 0& 0& 0& 0& 0& 0& 0& 0& 0\\
    0& 0& 0& 0& 0& 0& 1& 0& 1& 0& 0& 1& 1& 0& 0& 0& 0& 0& 0& 0& 0\\
   0& 0& 0& 0&  0& 0& 0& 1& 1& 0& 0& 0& 0& 1& 1& 0& 0& 0& 0& 0& 0\\
   0& 0& 0& 0& 0&  0& 1& 0& 1& 0& 0& 0& 0& 0& 0& 1& 1& 0& 0& 0& 0\\
   3 z& 0& 0& 0& 0&  0& 0& 0& 0& 0& 0& 0& 0& 0& 0& 0& 0& 0& 0& 0& 1\\
  z& 0& 0& 0& z& 0&   0& 0& 0& 0& 0& 0& 0& 0& 0& 0& 0& 0& 1& 0& 1\\
  z& 0& 0& z& 0& 0& 0&   0& 0& 0& 0& 0& 0& 0& 0& 0& 0& 0& 0& 1& 1\\
  0& 0& 0& 0& 0& 0& 0& 0&   0& 0& 0& 0& 0& 0& 0& 0& 0& 1& 1& 1& 1\\
  2 z& z& 0& 0& 0& 0& 0& 0&   0& 0& 0& 0& 0& 0& 0& 0& 0& 0& 0& 1& 0\\
  z& 0& 0& 0& z& 0& 0& 0& 0&   0& 0& 0& 0& 0& 0& 0& 0& 1& 0& 1& 0\\
  z& 0& 0& z& 0& 0& 0& 0& 0& 0&   0& 0& 0& 0& 0& 0& 0& 1& 1& 0& 0\\
  2 z& 0& z& 0& 0& 0& 0& 0& 0& 0&   0& 0& 0& 0& 0& 0& 0& 0& 1& 0& 0\\
  0& 0& 0& 0& 0& 0& 0& 0& 0& 0& 0&   0& 0& 0& 0& 0& 0& 1& 1& 1& 1\\
  z& 0& 0& z& 0& 0& 0& 0& 0& 0& 0& 0&   0& 0& 0& 0& 0& 0& 0& 1& 1\\
  z& 0& 0& 0& z& 0& 0& 0& 0& 0& 0& 0& 0&   0& 0& 0& 0& 0& 1& 0& 1\\
  0& 0& 0& 0& 0& 0& 0& 0& 0& 0& 0& 0& 0& 0&   0& 0& 0& 1& 1& 1& 1\\
  z& z& z& 0& 0& 0& 0& 0& 0& 0& 0& 0& 0& 0& 0&   0& 0& 1& 0& 0& 0\\
  2 z& 0& z& 0& 0& 0& 0& 0& 0& 0& 0& 0& 0& 0& 0&   0& 0& 0& 1& 0& 0\\
  2 z& z& 0& 0& 0& 0& 0& 0& 0& 0& 0& 0& 0& 0& 0&   0& 0& 0& 0& 1& 0\\
  3 z& 0& 0& 0& 0& 0& 0& 0& 0& 0& 0& 0& 0& 0& 0&   0& 0& 0& 0& 0& 1 
    \end{pmatrix}$$\caption{Adjacency matrix of the transducer in Example~\ref{ex:adjmat}}\label{tab:allgdim}\end{table}

    The characteristic polynomial of $A$ is
    $$-(x-1) x^7 \left(x^2-2 z\right) \left(x^3-x^2-xz-2 z\right)^2 \left(x^5-x^4-7 x^3z-20 x^2 z+6 x z^2-24 z^2\right).$$
    At $t=0$, the dominating eigenvalue $\mu(0)=4$ is a root of the fourth factor. Therefore the Taylor expansion of $\mu(t)$ around $t=0$ is
    $$\mu(t)=4+\frac{128i}{89}t-\frac{673216}{2114907}t^2+\mathcal O(t^3).$$
    
    Hence the expected value of the Hamming weight is
    $$\frac{32}{89}\log_2 N+\mathcal O(1)$$
    and the variance is
    $$\frac{63200}{2114907}\log_2N+\mathcal O(1).$$
\end{example}
\bigskip

In order to determine the constants $e_{\ell,u,d}$ and $v_{\ell,u,d}$  giving
mean and variance in general,
we rephrase the results of the ``full-block-length'' analysis in
\cite{asymmdigits} in a probability model which is easily compared with our
main results.

\begin{lemma}\label{lm:dluvert2}
  Let $k\ge w$ be a positive integer. Let $\wtW_k$ be the Hamming weight of the
  AJSF of a random vector $m=(m_1,\ldots,m_d)^T$  with
  equidistribution of all vectors $m=(m_1,\ldots,m_d)^T$ with $0\le m_i<2^k$.

  Then 
  \[
    \Expect \wtW_k=e_{\ell,u,d} k+ \bigOh(1) \text{~~~and~~~}
    \Var \wtW_k=v_{\ell,u,d} k+ \bigOh(\sqrt k)
    \]
  for the constants given in Theorem~\ref{thm:asywgen}.
\end{lemma}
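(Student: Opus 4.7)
The plan is to deduce Lemma~\ref{lm:dluvert2} as an immediate specialization of Theorem~\ref{thm:asywgen} to $N = 2^k$. Under the probability model of Theorem~\ref{thm:asywgen} with $N = 2^k$, we sample uniformly from $\{(m_1,\ldots,m_d)^T : 0\le m_i < 2^k\}$, which is precisely the model of this lemma. All conclusions of the theorem therefore apply verbatim with $\log_2 N$ replaced by $k$.

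The key observation is that $k$ is a nonnegative integer, so $\{\log_2 N\}=0$, and the continuous $1$-periodic functions $\Psi_1$ and $\Psi_2$ satisfy $\Psi_j(k)=\Psi_j(0)$ for every integer $k$; these values are constants independent of $k$. Moreover, the error terms $\bigOh(N^{-\delta}\log N)$ and $\bigOh(N^{-\delta}\log^2 N)$ become $\bigOh(2^{-\delta k} k)$ and $\bigOh(2^{-\delta k} k^2)$, which decay exponentially in $k$ and are absorbed into $\bigOh(1)$. Substituting these into the estimates of Theorem~\ref{thm:asywgen} gives
\[
\Expect \wtW_k = e_{\ell,u,d}\, k + \Psi_1(0) + \bigOh(1) = e_{\ell,u,d}\, k + \bigOh(1)
\]
and
\[
\Var \wtW_k = v_{\ell,u,d}\, k - \Psi_1^2(0) + \Psi_2(0) + \bigOh(1) = v_{\ell,u,d}\, k + \bigOh(1),
\]
from which the bound $v_{\ell,u,d}\, k + \bigOh(\sqrt k)$ is immediate. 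The hypothesis $k\ge w$ is used only to ensure that the transducer of Theorem~\ref{thm:transdimd} governs all digit positions relevant to the AJSF of $m$, so that there are no boundary artefacts.

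No substantive obstacle is anticipated: the lemma is essentially a corollary, and one in fact obtains a sharper variance error $\bigOh(1)$ than the stated $\bigOh(\sqrt k)$. The looser bound is presumably stated at this level of precision because the lemma is meant to be matched against the full-block-length analysis of~\cite{asymmdigits}, where variance estimates of order $\bigOh(\sqrt k)$ arise naturally via Markov-chain central limit arguments and this weaker form suffices to identify the constants $e_{\ell,u,d}$ and $v_{\ell,u,d}$ explicitly.
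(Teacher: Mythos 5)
Your derivation is mechanically sound but logically misplaced: it inverts the role the lemma plays in the paper. Lemma~\ref{lm:dluvert2} sits \emph{inside} the proof of Theorem~\ref{thm:asywgen} and is precisely the device by which the abstractly defined constants $e_{\ell,u,d}=-ia_1\log 2$ and $v_{\ell,u,d}=-2a_2\log 2$ (Taylor coefficients of $\log_2\mu(t)$) are identified with the explicitly computed constants of the full-block-length analysis of \cite{asymmdigits} --- whence the formula $e_{\ell,u,1}=\frac{1}{w-1+\lambda}$ and the table references in the theorem's statement. If ``the constants given in Theorem~\ref{thm:asywgen}'' are read as those explicit values, your proof is circular, since that part of the theorem is established only by means of this lemma; if they are read as the abstract Taylor coefficients, your proof is valid but yields a tautology that can no longer be used to determine the constants, defeating the lemma's purpose.

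The paper argues in the opposite direction. It observes that the $j$-th AJSF digit $\wtX_j$ depends only on $m$ modulo $2^{j+w}$, so for $j<k-w$ it is identically distributed to the digit $X_j$ in the $2$-adic (Haar measure) model of \cite{asymmdigits}; hence the truncated weight $\wtW_{k-w}$ has the same distribution as $W_{k-w}$ there. It then imports $\Expect W_{k-w}=e_{\ell,u,d}(k-w)+\bigOh(1)$ and $\Var W_{k-w}=v_{\ell,u,d}(k-w)+\bigOh(1)$ from \cite[Theorem~6.7]{asymmdigits} and controls the remainder $\wtW_k-\wtW_{k-w}$, which is deterministically bounded by $w$, via
\[
\Cov\bigl(\wtW_{k-w},\wtW_k-\wtW_{k-w}\bigr)\le\sqrt{\Var \wtW_{k-w}\,\Var(\wtW_k-\wtW_{k-w})}=\bigOh(\sqrt k).
\]
This Cauchy--Schwarz step is the source of the $\bigOh(\sqrt k)$ in the variance --- it is an artefact of the truncation, not of any central limit argument in \cite{asymmdigits}, contrary to your closing speculation --- and your ``sharper'' $\bigOh(1)$ bound is only as trustworthy as the circular route that produced it.
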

\begin{proof}
  For $j<k$, we denote the $j$-th digit of
  the AJSF of a random vector $m=(m_1,\ldots,m_d)^T$ by $\wtX_j$, where we
  assume equidistribution of all vectors $m=(m_1,\ldots,m_d)^T$ with $0\le
  m_i<2^k$. 

  In \cite[\S~6.2]{asymmdigits}, the random variables $X_j$ denoting the $j$-th
  digit of a random AJSF has been considered, where the probability measure was
  defined to be the image of the Haar measure on the space of $d$-tuples  of
  $2$-adic integers under the AJSF, i.e., equidistribution on all residue
  classes modulo $2^\ell$ for all $\ell$ has been assumed. Furthermore, $W_j$
  was defined to be the weight of the first $j$ digits.

  From Algorithm~\ref{alg:d}, it is clear that $\wtX_j$ only depends
  on $m$ modulo $2^{j+w}$. This implies that $\wtX_j$ and $X_j$ are identically
  distributed for $j<k-w$. Therefore $W_{k-w}$ and $\wtW_{k-w}$ are identically distributed, too. Furthermore, we always have $|\wtW_k-\wtW_{k-w}|\le w$. 

  By \cite[Theorem~6.7]{asymmdigits}, we have 
  \[
    \Expect \wtW_{k-w}=\Expect W_{k-w} =e_{\ell,u,d} (k-w)+ \bigOh(1) \text{~~~and~~~}
    \Var \wtW_{k-w}=\Var W_{k-w} =v_{\ell,u,d} (k-w)+ \bigOh(1).
    \]
  
  We conclude that
  \begin{align*}
     \Expect \wtW_k &=\Expect \wtW_{k-w} +\bigOh(1)=\Expect W_{k-w}+\bigOh(1)=e_{\ell,u,d}k+\bigOh(1),\\
     \Var \wtW_k &=\Var \wtW_{k-w} +\Var(\wtW_{k}-\wtW_{k-w})+2\Cov(\wtW_{k-w},
    \wtW_k-\wtW_{k-w})=\Var W_{k-w}+\bigOh(\sqrt{k}),
  \end{align*}
  where the Cauchy-Schwarz inequality has been used in the form
  \[ \Cov(\wtW_{k-w},
  \wtW_k-\wtW_{k-w}) \le \sqrt{\Var \wtW_{k-w}\Var(\wtW_k-\wtW_{k-w})}. \]
\end{proof}

In the next lemma, we prove that the function $\Psi_{1}(x)$
is non-differentiable at any real number in the $1$-dimensional case. The
proof uses the method presented by Tenenbaum~\cite{Tenenbaum:1997:non-derivabilite}, see also Grabner and Thuswaldner~\cite{thuswaldner}. 

\begin{lemma}
Let $d=1$. Then the function $\Psi_{1}(x)$  in Theorem
\ref{thm:asywgen} is nowhere differentiable.
\end{lemma}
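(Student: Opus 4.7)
The plan is to follow the strategy of Tenenbaum~\cite{Tenenbaum:1997:non-derivabilite} and Grabner--Thuswaldner~\cite{thuswaldner}, exploiting the fact that in dimension one the transducer of Theorem~\ref{th:existtrans1} is fully explicit. The starting point is the identity extracted from the expected-value formula in the proof of Theorem~\ref{thm:asywgen},
$$\Psi_1(\log_2 N) = \frac{1}{N}\sum_{m<N} h(m) - e_{l,u,1}\log_2 N + O(N^{-\delta}\log N),$$
so non-differentiability of $\Psi_1$ is equivalent to a sufficiently strong local oscillation of the renormalised partial sum $N\mapsto \frac{1}{N}\sum_{m<N} h(m) - e_{l,u,1}\log_2 N$.

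First, I would make the self-similar structure of $\Psi_1$ explicit. Differentiating the series~(\ref{eq:lambda}) in $t$ at $t=0$ and using the spectral gap $\log_2\beta(0)<1$, one rewrites $\Psi_1(x)$ as an absolutely convergent series indexed by the binary digits of $2^{\{x\}}$, with coefficients built from powers of $A|_{t=0}$ paired with the derivative matrices of $A$ and $H$. This representation is the analogue of Delange's formula for the ordinary sum-of-digits function and is what makes $\Psi_1$ amenable to a Tenenbaum-style analysis at dyadic scales.

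Second, fix $x_0\in\mathbb{R}$ and, for large $k$, choose integers $N_k$ whose binary expansion satisfies $\log_2 N_k\equiv x_0\pmod{1}$ up to $o(1)$ error and whose last $k$ bits can be varied through two contrasting patterns (for instance $0^k$ versus $(10)^{k/2}$). Setting $N_k'=N_k+2^{j_k}$ for an appropriate $j_k$ so that $\log_2 N_k'-\log_2 N_k\sim 2^{-k}/\ln 2$, the difference $\Psi_1(\log_2 N_k')-\Psi_1(\log_2 N_k)$ decomposes, via the transducer of Theorem~\ref{th:existtrans1}, into a dominant Perron-type contribution from $A|_{t=0}$ modulated by the chosen trailing pattern. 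If $\Psi_1'(x_0)$ existed, both normalised differences would have to converge to the same limit as $k\to\infty$, which will yield the desired contradiction once the two patterns are shown to produce different limits.

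The key obstacle, and the core of the method, is precisely this non-degeneracy statement: that the two patterns give genuinely distinct limits of the normalised difference quotient. It reduces to checking that the Perron left and right eigenvectors of $A|_{t=0}$, paired against the incremental matrices corresponding to odd- versus even-digit transitions in the transducer, take different values on the two chosen patterns. Because Theorem~\ref{th:existtrans1} provides a transducer with fewer than $4w-2$ states and the relevant eigendata are algebraic, this non-degeneracy reduces to a finite, parameter-dependent computation that can be carried out uniformly in $l$ and $u$. The analogous non-degeneracy for the higher-dimensional transducer of Theorem~\ref{thm:transdimd} becomes combinatorially unwieldy, which is the reason the paper leaves open the question of non-differentiability of $\Psi_1$ for $d>1$ and of $\Psi_2$ altogether.
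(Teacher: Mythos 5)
There is a genuine gap: your entire argument funnels into the ``non-degeneracy statement'' that two trailing bit patterns produce distinct limits of the normalised difference quotient, and you do not prove it --- you only assert that it ``reduces to a finite, parameter-dependent computation that can be carried out uniformly in $l$ and $u$.'' That assertion is not justified: the transducer of Theorem~\ref{th:existtrans1} has a number of states growing linearly in $w$, so there is no single finite computation covering all digit sets $D_{l,u}$, and the pairing of the Perron eigenvectors against the incremental matrices for your two chosen patterns could in principle degenerate for particular $(l,u)$. Since this is exactly the point where non-differentiability could fail (a perfectly self-similar series can still be differentiable if the relevant coefficients conspire), the proposal as written does not establish the lemma.

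The paper's proof avoids this obstacle entirely by a different closing move. It uses the reset sequence $0^{4w}$ from Theorem~\ref{thm:transdimd} to get the exact additivity $\h(2^{p+4w}n+m)=\h(n)+\h(m)$ for $m<2^p$, evaluates the block sum $\sum_{N_k\le n<N_k+g(N_k)}\h(n)$ (with $g(N)\approx N^{c/(c+1)}$ chosen so the error term $N^{1-\delta}\log N$ is negligible) in two ways --- once via this additivity and once via the asymptotic formula~(\ref{eq:2}) --- and then uses the assumed differentiability of $\Psi_1$ at $x$ to expand $\Psi_1(y_k)-\Psi_1(x_k)$. Subtracting consecutive terms yields $\h(N_{k+1})-\h(N_k)=e_{l,u,1}+o(1)$, and the contradiction is purely arithmetic: the left-hand side is an integer while $0<e_{l,u,1}=\frac{1}{w-1+\lambda}<1$. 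No eigenvector non-degeneracy needs to be checked, which is precisely what makes the argument uniform in $l$ and $u$. If you want to complete your route instead, you would have to actually exhibit, for every admissible $(l,u)$, two patterns with provably different limits; the integer-gap trick is the standard way (following Tenenbaum) to sidestep that verification, and you should adopt it.
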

\begin{proof}
Let $g(N)=\left\lfloor 2^{-1-4w}N^{\frac{c}{c+1}}\right\rfloor$ be a positive integer valued
function with $c\in\mathbb Z$ and $c>\frac{1}{\delta}-1$. We have $g(N)=o(N)$ and $N^{1-\delta}\log
N=o(g(N))$.

Assume $\Psi_{1}$ is differentiable at $x\in[0,1)$. Let
$2^{x}=\sum_{p=0}^{\infty}\eps_{p}2^{-p}$ be the standard binary digit expansion
choosing the representation ending on $0^{\omega}$ in the case of
ambiguity. Further, let $x_{k}$, $y_{k}$ and $N_{k}$ be such that
$2^{x_{k}}=\sum_{p=0}^{k}\eps_{p}2^{-p}$, $N_{k}=2^{k(c+1)+x_{k}}\in\mathbb
Z$ and $2^{k(c+1)+y_{k}}=N_{k}+g(N_{k})$. Then we have
\begin{align*}
x-x_{k}&=\mathcal O(2^{-k}),\\
y_{k}-x_{k}&=\log_{2}\left(1+\frac{g(N_{k})}{N_{k}}\right)=\frac{1}{\log 2}\frac{g(N_{k})}{N_{k}}+\mathcal
O\left(\frac{g(N_{k})^{2}}{N_{k}^{2}}\right),\\
\lim_{k\rightarrow\infty}y_{k}&=x.
\end{align*}

Because of the choice of $g(N)$, we
have 
\begin{align*}g(N_{k})&<2^{ck-4w},\\
\frac{g(N_{k})}{N_{k}}&=\Theta(2^{-k}).\end{align*}

We have $\h(2^{p+4w}n+m)=\h(n)+\h(m)$ for
$p\geq0$ and $m<2^{p}$ because $0^{4w}$ is a reset sequence leading to the
initial state (see Theorem~\ref{thm:transdimd}). Due to~(\ref{eq:2}) and the periodicity and continuity of $\Psi_{1}$, we have
\begin{align}\label{eq:4}\sum_{N_{k}\leq n<N_{k}+g(N_{k})}\h(n)&=g(N_{k})\h(N_{k})+\sum_{n<g(N_{k})}\h(n)\\
&=g(N_{k})\h(N_{k})+e_{l,u,1}g(N_{k})\log_{2}g(N_{k})+g(N_{k})\Psi_{1}\left(\frac{c}{c+1}x\right)+o(g(N_{k})).\nonumber
\end{align}

On the other hand, we have

\begin{align}\label{eq:5}\sum_{N_{k}\leq
    n<N+g(N_{k})}\h(n)&=e_{l,u,1}(N_{k}+g(N_{k}))\log_{2}(N_{k}+g(N_{k}))+(N_{k}+g(N_{k}))\Psi_{1}(\log_{2}(N_{k}+g(N_{k})))\\&\qquad-e_{l,u,1}N_{k}\log_{2}N_{k}-N_{k}\Psi_{1}(\log_{2}N_{k})+O(N_{k}^{1-\delta}\log
N_{k})\nonumber\\
&=e_{l,u,1}N_{k}(y_{k}-x_{k})+N_{k}(\Psi_{1}(y_{k})-\Psi_{1}(x_{k}))+e_{l,u,1}g(N_{k})(k(c+1)+y_{k})\nonumber\\
&\qquad+g(N_{k})\Psi_{1}(y_{k})+o(g(N_{k})).\nonumber
\end{align}

If we divide by $g(N_{k})$ in (\ref{eq:4}) and (\ref{eq:5}), then we obtain
\begin{align*}
\h(N_{k})+e_{l,u,1}\log_{2}g(N_{k})+\Psi_{1}\left(\frac{c}{c+1}x\right)&=e_{l,u,1}(y_{k}-x_{k})\frac{N_{k}}{g(N_{k})}+\frac{N_{k}}{g(N_{k})}(\Psi_{1}(y_{k})
-\Psi_{1}(x_{k}))\\&\qquad+e_{l,u,1}(k(c+1)+y_{k})+\Psi_{1}(y_{k})+o(1).
\end{align*}

Now we can write the difference of the $\Psi_{1}$ on the right-hand side in
terms of the derivative 
$$\Psi_{1}(y_{k})-\Psi_{1}(x_{k})=\Psi_{1}'(x)(y_{k}-x_{k})+o(x-x_{k})+o(\vert
y_{k}-x\vert),$$
and we get
\begin{align*}
\h(N_{k})&=-\Psi_{1}\left(\frac{c}{c+1}x\right)+\frac{e_{l,u,1}}{\log
2}+\frac{\Psi_{1}'(x)}{\log 2}+e_{l,u,1}(k+\frac{x}{c+1})+\Psi_{1}(x)+o(1).
\end{align*}
Next, we take the difference of two subsequent terms
\begin{align}\label{eq:6}\h(N_{k+1})-\h(N_{k})=e_{l,u,1}+o(1),
\end{align}
where the left-hand side is an integer. We have $e_{l,u,1}\not\in\mathbb Z$ because
$0<e_{l,u,1}=\frac{1}{w-1+\lambda}<1$.

Therefore the
right-hand side of~(\ref{eq:6}) is not an integer for $k$ large enough. This contradicts our
assumption that $\Psi_{1}$ is differentiable in $x$.
\end{proof}

\section{Asymptotic distribution of the \texorpdfstring{$\wNAF$}{w-NAF}}\label{sec:roots}

In this section, we specialize the result of Theorem~\ref{thm:asywgen} to the $\wNAF$.

\begin{theorem}\label{thm:wNAF}
  The weight $\h(n)$ of the $\wNAF$ of the integer $n$ with equidistribution
  on $\{n\in\Z\mid 0\leq n<N\}$ is asymptotically normally distributed. There
  exists a $\delta>0$ such that the mean is 
  $$\frac1{w+1}\log_2N+\Psi_1(\log_2N)+\mathcal O(N^{-\delta}\log N)$$
   and the variance is
   $$\frac2{\left(w+1\right)^{3}}\log_2 N-\Psi_1^2(\log_2N)+\Psi_2(\log_2N)
    +\mathcal O(N^{-\delta}\log^2N),$$
    where $\Psi_1$ and $\Psi_2$ are continuous, $1$-periodic functions on $\mathbb
    R$. If $w$ is large enough, then $\delta=\log_{2}\left(1+\frac{3\pi^{2}}{w^{3}}\right)$. In particular, we have
    $$\mathbb P \left(\frac{\h(n)-\frac{\log_2N}{w+1}}{\sqrt{\frac2{\left(w+1\right)^3}\log_2N}}<x\right)=\frac1{\sqrt{2\pi}}\int_{-\infty}^xe^{-\frac{y^2}2}dy+\mathcal O\left(\frac1{\sqrt[4]{\log N}}\right)$$
    for all $x\in\mathbb R$.
\end{theorem}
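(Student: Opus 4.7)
The plan is to derive Theorem \ref{thm:wNAF} in two stages: first, by specializing Theorem \ref{thm:asywgen} to the one-dimensional wNAF digit set, and second, by sharpening the error exponent $\delta$ through a more careful analysis of the spectrum of the transducer in Figure \ref{aut:wNAF}.

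For the specialization, I set $\ell=-2^{w-1}+1$ and $u=2^{w-1}-1$, so that the AJSF with digit set $D_{\ell,u}$ coincides with the wNAF: condition (1) of Theorem \ref{th:AJSF} enforces odd-or-zero columns and condition (2) the $w-1$ forced zeros after a nonzero column. Since $\ell$ and $u$ are odd and $u-\ell+1=2^w-1\in[2^{w-1},2^w)$, the AJSF width coincides with the wNAF width $w$. Plugging into the $d=1$ formulas of Theorem \ref{thm:asywgen} yields
\[
\lambda=\frac{2(2^w-1)+1+1}{2^w}=2,\qquad e_{\ell,u,1}=\frac{1}{w+1},\qquad v_{\ell,u,1}=\frac{(3-2)\cdot 2}{(w+1)^3}=\frac{2}{(w+1)^3},
\]
matching the constants in the theorem. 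The asymptotic normality with the $\bigOh(1/\sqrt[4]{\log N})$ error, together with the main-term-plus-periodic-fluctuation-plus-remainder structure of the mean and variance, is then inherited directly from Theorem \ref{thm:asywgen}.

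The substantive part of the proof is establishing the explicit rate $\delta=\log_2(1+3\pi^2/w^3)$ for sufficiently large $w$. From the proof of Theorem \ref{thm:asywgen}, any positive $\delta$ with $\log_2\beta(0)<1-\delta$ is admissible, where $\beta(0)$ is the modulus of the second largest eigenvalue of the adjacency matrix $A$ of the transducer in Figure \ref{aut:wNAF} at $t=0$. Because that transducer has a very regular structure, consisting of two looping states joined by a short chain of length $O(w)$, its characteristic polynomial admits a closed form. I would compute that characteristic polynomial, factor out the Perron eigenvalue $\mu(0)=2$, and study the largest-modulus root $\beta(0)$ of the remaining factor $q_w(x)$ as $w\to\infty$.

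The main obstacle is the precise extraction of the constant $3\pi^2$ in the expansion of $\beta(0)$ near $2$. Since $2/\beta(0)=1+\eta/2+\bigOh(\eta^2)$ for $\beta(0)=2-\eta$, the required bound $2/\beta(0)\ge 1+3\pi^2/w^3$ translates into $\eta\ge 6\pi^2/w^3+o(1/w^3)$. I would substitute $\beta(0)=2-\eta$ into $q_w(\beta(0))=0$ and carefully balance the contributions that scale like powers of $w$ against those that scale like powers of $\eta$, keeping enough higher-order terms to isolate the $1/w^3$ correction. The appearance of $\pi^2$ strongly suggests that a Basel-type identity $\sum k^{-2}=\pi^2/6$ surfaces from a contour-integral or Poisson-summation representation of the relevant eigenvalue equation; pinning down the exact location of the critical root is the technical heart of the argument. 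Once the spectral bound $\beta(0)\le 2/(1+3\pi^2/w^3)$ is in hand, the admissibility of $\delta=\log_2(1+3\pi^2/w^3)$ in the proof of Theorem \ref{thm:asywgen} completes the proof.
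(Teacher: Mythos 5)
Your first stage is correct and is exactly what the paper does: with $\ell=-2^{w-1}+1$, $u=2^{w-1}-1$ both odd and $u-\ell+1=2^w-1$, one gets $\lambda=2$, hence $e_{\ell,u,1}=\frac{1}{w+1}$ and $v_{\ell,u,1}=\frac{2}{(w+1)^3}$, and the normality statement is inherited from Theorem~\ref{thm:asywgen}. Your second stage also identifies the right object (the second-largest eigenvalue $\beta(0)$ of the adjacency matrix of the transducer in Figure~\ref{aut:wNAF}), but this is precisely where the proposal stops being a proof: everything after ``I would compute that characteristic polynomial'' is a plan, and the one concrete guess you make about the mechanism is wrong. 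The constant $\pi^2$ does \emph{not} come from a Basel-type identity $\sum k^{-2}=\pi^2/6$ or from Poisson summation. The paper's Lemma~\ref{lem:charpoly} computes the characteristic polynomial as $(x-1)(x^w-x^{w-1}-2^{w-1}e^{it})$, substitutes $z=2/x$ at $t=0$ to get $z^w+z-2=0$, and localizes the $w$ roots near the $w$-th roots of unity $\rho^k=e^{2\pi ik/w}$ by a Banach fixed-point argument with $f_k(z)=(2-z)^{1/w}e^{2\pi ik/w}$ on disjoint sectors. Expanding the fixed-point equation gives $|z|=1+\frac{4\pi^2k^2}{w^3}+\bigOh\bigl(\frac{k^3}{w^4}\bigr)$; the subdominant roots are those with $k=\pm1$, so $2/\beta(0)=1+\frac{4\pi^2}{w^3}+\bigOh\bigl(\frac{1}{w^4}\bigr)$, and $3\pi^2$ is simply a safe margin below $4\pi^2$, not a sharp constant produced by a summation identity.

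Beyond misattributing the source of $\pi^2$, your sketch omits the step that actually carries the burden of the argument: you must show that \emph{no} eigenvalue other than $2$ and $1$ lies outside the disk $|x|\le 2/(1+\frac{3\pi^2}{w^3})$, i.e., that every root $z\ne 1$ of $z^w+z-2$ satisfies $|z|\ge 1+\frac{4\pi^2}{w^3}+\bigOh(w^{-4})$. This requires (i) proving there is exactly one root in each sector (so none are unaccounted for), and (ii) a separate lower bound for the roots with $k\ge w^{\alpha}$, $\alpha\in(\frac12,1)$, where the local expansion around $\rho^k$ is no longer accurate enough; the paper handles this with the estimate $|2-\rho^k|\ge 1+4\pi^2w^{2\alpha-2}+\bigOh(w^{4\alpha-4})$. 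Without these two ingredients the claimed admissibility of $\delta=\log_2\bigl(1+\frac{3\pi^2}{w^3}\bigr)$ is not established. Your reduction of the problem to a bound on $\beta(0)$, and the arithmetic translating that bound into $\delta$, are fine; it is the spectral localization itself that is missing.
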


This follows from the following Lemma \ref{lem:charpoly} and Theorem \ref{thm:asywgen}.

\begin{lemma}\label{lem:charpoly}
The characteristic polynomial of the matrix $A$ of the transducer in
Figure~\ref{aut:wNAF} is $(x-1)(x^{w}-x^{w-1}-2^{w-1}e^{it})$. The largest
eigenvalue $\mu(t)$ is unique around $t=0$ and $\mu(0)=2$. Furthermore, for
large enough $w$ and $t=0$, there is exactly one simple eigenvalue in $T_k=\{x\in \mathbb C:|x|\geq \frac2{1+\frac3w},\,|\arg
x-\frac{2k\pi}w|\leq\frac{\pi}{2w}\}$ for each $k=0,\ldots,w-1$. Additionally
there is the obvious 
eigenvalue $x=1$. The eigenvalues with the second largest absolute value are in
$T_{1}$ and $T_{w-1}$. For each eigenvalue at $t=0$, an expansion in $\frac1w$
can be computed with arbitrarily small error term.
\end{lemma}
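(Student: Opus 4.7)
The adjacency matrix $A$ read off from Figure~\ref{aut:wNAF} has size $(w+1)\times(w+1)$ and is sparse. Labeling the states $0,1,\ldots,w-1$ (``$i$ positions past the last nonzero'', with $0$ initial and final) and $w$ (the carry state), and writing $z=e^{it}$, the only nonzero entries are $A_{0,0}=1$, $A_{0,1}=z$, $A_{i,i+1}=2$ for $1\le i\le w-2$, $A_{w-1,0}=A_{w-1,w}=1$, $A_{w,w}=1$, and $A_{w,1}=z$. The first step is to compute $\det(xI-A)$ by cofactor expansion along column $0$ (only two nonzero entries), reducing to two Hessenberg-like determinants each evaluable in one recursion step; after simplification this yields $(x-1)(x^w-x^{w-1}-2^{w-1}z)$. (A sanity check for $w=2$ reproduces $(x-1)(x^2-x-2z)$.)

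For the dominance of $\mu(0)=2$: at $t=0$, $x=2$ is clearly a simple root of $x^w-x^{w-1}-2^{w-1}$, and the graph underlying $A$ is strongly connected with a self-loop at state $0$, hence primitive. Perron--Frobenius then gives a unique simple dominant eigenvalue equal to the spectral radius $\|A(0)\|_1=2$; continuity of eigenvalues extends this to $\mu(t)$ as a unique simple dominant eigenvalue in a complex neighborhood of $t=0$.

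The main work is to localize, at $t=0$, the remaining $w$ roots of $p_w(x)=x^w-x^{w-1}-2^{w-1}$ for large $w$. The approach is Rouch\'e's theorem against $q_w(x)=x^w-2^{w-1}$, whose roots are $\zeta_k=2^{(w-1)/w}e^{2\pi i k/w}$ for $k=0,\ldots,w-1$; one shows $|q_w(x)|>|x|^{w-1}$ on (a slight enlargement of) $\partial T_k$, so $p_w=q_w-x^{w-1}$ has exactly one simple root in each $T_k$. On the angular sides $|\arg x-2\pi k/w|=\pi/(2w)$ we have $\arg(x^w)\equiv\pm\pi/2\pmod{2\pi}$, hence $|q_w(x)|^2=|x|^{2w}+2^{2(w-1)}>|x|^{2(w-1)}$ whenever $|x|>1$. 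On the inner radial arc $|x|=r:=2/(1+3/w)$ the asymptotics $r^w\sim 2^w e^{-3}$ and $r^{w-1}\sim 2^{w-1}e^{-3}$ reduce the required inequality $2^{w-1}-r^w>r^{w-1}$ to $e^3>3$ for $w$ large. All roots lie in $|x|\le\|A\|_1=2$, so any outer boundary can be chosen at radius $2(1+c/w)$ to avoid the single equality point $x=2e^{2\pi i k/w}$ that arises on $|x|=2$.

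The final step is the $1/w$-expansion. Setting $x_k=\zeta_k(1+\eta_k)$ and plugging into $p_w(x_k)=0$ gives $(1+\eta_k)^w-1=\zeta_k^{-1}(1+\eta_k)^{w-1}$, whose first-order solution is $\eta_k=\zeta_k^{-1}/w+\mathcal O(1/w^2)$. Hence $x_k=\zeta_k+1/w+\mathcal O(1/w^2)$, and $|x_k|=2^{(w-1)/w}+\cos(2\pi k/w)/w+\mathcal O(1/w^2)$. For $k=0$ this recovers $x_0=2$; for $k\ne 0$ the modulus is maximal at $k=1$ and $k=w-1$, identifying the eigenvalues of second-largest absolute value. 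Iterating the fixed-point equation $x=(x^{w-1}+2^{w-1})^{1/w}$ produces the expansion in $1/w$ to arbitrary order. The main technical obstacle I anticipate is the sharp Rouch\'e estimate on the inner radial arc and the handling of the equality point on the outer boundary; the specific constant $3$ in the definition of $T_k$ looks tailored to make the inner-arc inequality clean, with the margin controlled by $e^3>3$.
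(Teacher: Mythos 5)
Your computation of the characteristic polynomial (cofactor expansion, sanity-checked at $w=2$) and the Perron--Frobenius/primitivity argument for the unique dominant eigenvalue $\mu(0)=2$ are sound. Your localization of the remaining roots of $x^w-x^{w-1}-2^{w-1}$ is also essentially correct, and it is a genuinely different route from the paper's: you apply Rouch\'e's theorem against $x^w-2^{w-1}$ on a truncation of each sector $T_k$, pushing the outer boundary to radius $2(1+c/w)$ to dodge the equality point on $|x|=2$, whereas the paper substitutes $z=2/x$ and runs a Banach fixed-point argument for the contraction $f_k(z)=(2-z)^{1/w}e^{2\pi i k/w}$ on the sectors $\wtT_k$. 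Both deliver exactly one simple root per sector; your boundary estimates (angular sides via $\arg(x^w)=\pm\pi/2$, inner arc via $e^3>3$) check out, and your version avoids the paper's verification that $f_k$ maps $\wtT_k$ into itself.

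The expansion step, however, contains a genuine error that leaves the last two claims of the lemma unproved. Writing $x_k=\zeta_k(1+\eta_k)$, you linearize $(1+\eta_k)^w-1\approx w\eta_k$; but $\eta_k=\Theta(1/w)$, so $w\eta_k=\Theta(1)$ and this linearization is invalid --- the correct first-order relation is $e^{w\eta_k}\bigl(1-\zeta_k^{-1}\bigr)=1+\bigOh(1/w)$, i.e.\ $w\eta_k=-\log\bigl(1-\zeta_k^{-1}\bigr)+\bigOh(1/w)$. Your formula $x_k=\zeta_k+1/w+\bigOh(1/w^2)$ already fails at $k=0$, where $x_0=2$ exactly while $\zeta_0+1/w=2-(2\log 2-1)/w+\bigOh(1/w^2)$. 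More importantly, even a corrected formula carrying an $\bigOh(1/w^2)$ error cannot identify the second-largest eigenvalues: the true moduli satisfy $|x_k|=2-8\pi^2k^2/w^3+\cdots$, so consecutive moduli differ by only $\bigOh(k/w^3)$, which is swallowed by your error term; and this $w^{-3}$ precision is exactly what Theorem~\ref{thm:wNAF} needs for $\delta=\log_2\bigl(1+\frac{3\pi^2}{w^3}\bigr)$. The paper obtains it by iterating the fixed-point map to get $z=1+\frac{2\pi ik}{w}-\frac{2\pi^2k^2}{w^2}-\frac{2\pi ik}{w^2}+\bigOh\bigl(\frac{k^3}{w^3}\bigr)$ uniformly for $k\le w^{\alpha}$ with $\alpha\in(\frac12,1)$, hence $|z|=1+\frac{4\pi^2k^2}{w^3}+\cdots$, and by a separate estimate showing the roots with $k\ge w^{\alpha}$ stay outside the relevant disk. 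You would need an analogous two-regime, uniform-in-$k$ iteration of $x=(x^{w-1}+2^{w-1})^{1/w}$ to repair this part of your argument.
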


\begin{proof}
The characteristic polynomial of $A$ is obtained by Laplace expansion. With
$z=\frac2x$, the interesting factor of the characteristic polynomial is transformed into $z^{w}+z-2=0$. The smallest root in absolute value of this polynomial is $1$
because for $|z|<1$ we have 
$$|z^{w}+z|=|z|\cdot |z^{w-1}+1|<2|z|<2.$$
 We use the fixed point equation $f_{k}(z)=z$
with $$f_{k}(z)=(2-z)^{\frac1w}e^{\frac{2\pi ik}{w}}$$ for
$k=0,\ldots,w-1$. Here, we take the main branch of the $w$-th root. After the substitution, we have $\wtT_{k}=\{z\in\mathbb C:\vert
z\vert\leq1+\frac3w,\,|\arg z-\frac{2k\pi}{w}|\leq\frac{\pi}{2w}\}$, which corresponds to $T_{-k\bmod w}$. For
$w$ large enough and $|z|\leq1+\frac3w$, we have
\begin{align*}\vert f_{k}'(z)\vert&=\frac1w\vert2-z\vert^{\frac1w-1}\\
&\leq\frac1w\left(1-\frac3w\right)^{\frac1w-1}\leq\frac1{w-3}
\end{align*}
and
\begin{align*}
\vert f_{k}(z)\vert&\leq\vert f_{k}(z)-f_{k}(1)\vert+\vert f_{k}(1)\vert\\
&\leq \frac1{w-3}\vert z-1\vert+1\leq\frac{2+\frac3w}{w-3}+1\leq 1+\frac3w.
\end{align*}
Furthermore, we have $|\arg f_{k}(z)-\frac{2k\pi}w|\leq\frac{\pi}{2w}$. Thus, $f_{k}(\wtT_{k})\subseteq
\wtT_{k}$, and $f_{k}$ is a contraction on $\wtT_{k}$ with
Lipschitz constant $(w-3)^{-1}<1$. Therefore, there exists a unique fixed point of $f_{k}$
in $\wtT_{k}$ for each $k$. Because $\wtT_{k}$ for $k=0,\ldots,w-1$ only intersect
in $0$, which is certainly no root of the polynomial, we found $w$ distinct
roots of the polynomial $z^{w}+z-2$. Thus, we found all roots of this polynomial.  We only have to investigate $0\leq k\leq \frac
w2$ because the coefficients of the polynomial are real. Let $z\in \wtT_{k}$ be the fixed point of
$f_{k}$. For $\rho=\exp(\frac{2\pi i}w)$, we have
$|z-\rho^{k}|=\bigOh(\frac1w)$.  Therefore, $z=\rho^{k}+\bigOh(\frac 1w)$.

For $k\leq w^{\alpha}$ with a fixed
$\alpha\in\left(\frac12,1\right)$, we have
\begin{align*}
z&=f(z)=1+\frac{2\pi ik}w+\bigOh\left(\frac {k^2}{w^2}\right).
\end{align*}
Iterating, we successively get
\begin{align*}
z&=1+\frac{2\pi
  ik}w-\frac{2\pi^{2}k^{2}}{w^{2}}-\frac{2\pi ik}{w^2}+\bigOh\left(\frac{k^3}{w^3}\right)\text{\quad
and}\\
|z|&=1+\frac{4\pi^{2}k^{2}}{w^{3}}+\bigOh\left(\frac{k^3}{w^4}\right).
\end{align*}
Therefore, for large $w$, only the
fixed points for $k=w-1,0,1$ are in the disk $\{z\in\mathbb C:|z|\leq1+\frac{10\pi^{2}}{w^{3}}\}$.

For $k\geq w^{\alpha}$,
we have
\begin{align*}
|2-\rho^{k}|&=\left(5-4\cos\left(\frac{2k\pi}w\right)\right)^{\frac12}\geq\left(5-4\cos\left(2w^{\alpha-1}\pi\right)\right)^{\frac12}\\
&=1+4\pi^{2}w^{2\alpha-2}+\bigOh(w^{4\alpha-4}),\\
|2-z|&\geq|2-\rho^{k}|-|z-\rho^{k}|\\
&=1+4\pi^{2}w^{2\alpha-2}+\bigOh(w^{4\alpha-4}+w^{-1}),\\
|f_{k}(z)|&=\exp\left(\frac1{w}\log|2-z|\right)\geq 1+4\pi^{2}w^{2\alpha-3}+\bigOh(w^{4\alpha-5}+w^{-2}).
\end{align*}
Thus for $k\geq w^{\alpha}$, the fixed point of $f_{k}$ is not in the disk
$\{z\in\mathbb C:|z|\leq1+\frac{10\pi^{2}}{w^{3}}\}$ for large $w$.
\end{proof}

\begin{acknowledgement}
  We thank the anonymous referees for their constructive comments and for
  encouraging us to prove the non-differentiability of $\Psi_1$.
\end{acknowledgement}
  \bibliographystyle{amsplain}
  \bibliography{lit}

\end{document}